\documentclass[12pt,leqno]{amsart}
\usepackage{amsmath}
\usepackage{amsthm}
\usepackage{amssymb}
\def\overset#1#2{{\mathrel{\mathop {{#2}_{}}\limits^{#1}}}}
\def\underset#1#2{{\mathrel{\mathop {{}_{} {#2}}\limits_{{#1}_{}}}}}
\def\upplim_#1{\underset{#1}{\overline\lim}\;}
\def\lowlim_#1{\underset{#1}{\underline\lim}\;}
\setlength{\textwidth}{160true mm}
\setlength{\textheight}{225true mm}
\setlength{\topmargin}{0true mm}
\setlength{\oddsidemargin}{3true mm}
\setlength{\evensidemargin}{3true mm}
\parindent=10pt
\parskip3pt

\newtheorem{corollary}[equation]{Corollary}
\newtheorem{claim}[equation]{\indent \rm {\it Claim}}
\newtheorem{definition}[equation]{Definition}

\newtheorem{lemma}[equation]{Lemma}

\newtheorem{proposition}[equation]{Proposition}

\newtheorem{theorem}[equation]{Theorem}

\newcommand{\C}{{\mathbb{C}}}

\newcommand{\ric}{\mathrm{Ric}}

\newcommand{\N}{\mathbb{N}}
\newcommand{\B}{\mathbb{B}}

\renewcommand{\P}{{\mathbb{P}}}

\newcommand{\R}{{\mathbb{R}}}

\newcommand{\rank}{\mathrm{rank}}
\newcommand{\supp}{\mathrm{Supp}\,}

\newcommand{\Z}{\mathbb{Z}}
\numberwithin{equation}{section}

\begin{document}
\title[Differentiably nondegenerate meromorphic mappings on K\"{a}hler manifolds]{Algebraic dependence and finiteness problems of differentiably nondegenerate meromorphic mappings on K\"{a}hler manifolds} 

\author{Si Duc Quang$^{1,2}$}
\address{$^{1}$Department of Mathematics, Hanoi National University of Education\\
136-Xuan Thuy, Cau Giay, Hanoi, Vienam.}
\address{$^{2}$Thang Long Institute of Mathematics and Applied Sciences\\
Nghiem Xuan Yem, Hoang Mai, Ha Noi.}
\email{quangsd@hnue.edu.vn}

\def\thefootnote{\empty}
\footnotetext{
2010 Mathematics Subject Classification:
Primary 32H30, 32A22; Secondary 30D35.\\
\hskip8pt Key words and phrases: finiteness theorem, K\"{a}hler manifold, meromorphic mapping.\\
}

\begin{abstract} {Let $M$ be a complete K\"{a}hler manifold, whose universal covering is biholomorphic to a ball $\B^m(R_0)$ in $\C^m$ ($0<R_0\le +\infty$). Our first aim in this paper is to study the algebraic dependence problem of differentiably meromorphic mappings. We will show that if $k$ differentibility nondegenerate meromorphic mappings $f^1,\ldots,f^k$ of $M$ into $\P^n(\C)\ (n\ge 2)$ satisfying the condition $(C_\rho)$  and sharing few hyperplanes in subgeneral position regardless of multiplicity then $f^1\wedge\cdots\wedge f^k\equiv 0$. For the second aim, we will show that there are at most two different differentiably nondegenerate meromorphic mappings of $M$ into $\P^n(\C)$ sharing $q\ (q\sim 2N-n+3+O(\rho))$ hyperplanes in $N-$subgeneral position regardless of multiplicity. Our results generalize previous finiteness and uniqueness theorems for differentiably meromorphic mappings of $\C^m$ and extend some previous results for the case of mappings on K\"{a}hler manifold.}
\end{abstract}

\maketitle

\section{Introduction}

In \cite{Fu86}, Fujimoto proved the following theorem, which is the first uniqueness theorem for meromorphic mappings on K\"{a}hler manifold.
 
\vskip0.2cm
{\bf Theorem C} (see \cite[Main Theorem]{Fu86}).
{\it Let $M$ be an $m$-dimensional connected K\"{a}hler manifold whose universal covering is biholomorphic to a ball $\B^m(R_0)$ in $\C^m$\ ($0<R_0\le +\infty$), and let $f,g$ be a linearly non-degenerate meromorphic mappings of $M$ into $\P^n(\C)\ (m\ge n)$ satisfying the condition $(C_\rho)$ for a positive number $\rho$. Let $H_1,\ldots,H_q$ be $q$ hyperplanes of $\P^n(\C)$ in general possition. Assume that

i) $f=g$ on $\bigcup_{i=1}^q\left (f^{-1}(H_i)\cup g^{-1}(H_i)\right)$,

ii) If $q> n+1+2\rho (l_f+l_g)+m_f+m_g$. 

Then $f=g$.}

Here, we say that $f$ satisfies the condition ($C_\rho$) if there exists a nonzero bounded continuous real-valued function $h$ on $M$ such that
$$\rho\Omega_f+dd^c\log h^2\ge \ric\ \omega,$$
and the numbers $l_f,l_g,m_f,m_g$ are positive numbers estimated in an explicit way. For the case where $f$ and $g$ are differentiably non-degenetate, we can take $m_f=m_g=1$ and $l_f=l_g=n$.

Our first purpose in this paper is to extend the above theorem to the case where $k$ differentiably nondegenerate meromorphic mappings $f^1,\ldots,f^k\ (2\le k\le n+1)$ sharing a the family of hyperplanes in $N-$subgeneral position. To state our result, we need to recall some following.

Let $M$ be an $m$-dimensional connected K\"{a}hler manifold whose universal covering is biholomorphic to a ball $\B^m(R_0)$ in $\C^m$\ ($0<R_0\le +\infty$). Let $f$ be a non-constant meromorphic mapping of $\B^m(R_0)$ into $\P^n(\C)$ with a reduced representation $f = (f_0 : \dots : f_n)$, and $H$ be a hyperplane in $\P^n(\C)$ given by $H=\{a_0\omega_0+\cdots +a_n\omega_n=0\},$ where $(a_0,\ldots ,a_n)\ne (0,\ldots ,0)$.
Set  $(f,H)=\sum_{i=0}^na_if_i$. We see that $\nu_{(f,H)}$ is the pull-back divisor of $H$ by $f$ and is also the divisor generated by the function $(f,H_i)$.

Let $H_1,\ldots ,H_q$ be $q$ hyperplanes  of $\P^n(\C)$ in $N-$subgeneral position.
Let $d$ be a positive integer, $\rho$ be a positive number and $f$ be a differentiably nondegenerate meromorphic mapping from $M$ into $\P^n(\C)\ (m\ge n)$ satisfying the condition $(C_\rho)$. We consider the set $\mathcal{D}(f,\{H_i\}_{i=1}^q,\rho,d)$ of all differentiably nondegenerate meromorphic mappings 
$g$ from $M$ into $\P^n(\C)$ satisfying the condition $(C_\rho)$ and the following conditions:
\begin{itemize}
\item[(a)] $\nu_ {(f,H_i)}^{[d]}=\nu_{(g,H_i)}^{[d]}\quad (1\le i \le q),$
\item[(b)] $f(z) = g(z)$ on $\bigcup_{i=1}^{q}f^{-1}(H_i)$.
\end{itemize}
Here, $\nu^{[d]}=\min\{\nu,d\}$ for each divisor $\nu$.

Then, our first result in this paper is stated as follows.

\begin{theorem}\label{thm1.1}
Let $M$ be an $m$-dimensional connected K\"{a}hler manifold whose universal covering is biholomorphic to a ball $\B^m(R_0)$ in $\C^m$\ ($0<R_0\le +\infty$), and let $f$ be a differentiably nondegenerate meromorphic mapping of $M$ into $\P^n(\C)\ (m\ge n)$ satisfying the condition $(C_\rho)$ for a positive number $\rho$. Let $H_1,\ldots,H_q$ be $q$ hyperplanes of $\P^n(\C)$ in $N-$subgeneral possition. Let $f^1,\ldots,f^k\ (2\le k\le n+1)$ be elements in $\mathcal{D}(f,\{H_i\}_{i=1}^{q},\rho,1)$.

a) If $q> 2N-n+1+\dfrac{k(2N-n+1)}{(k-1)(n+1)}+kn\rho$ then $f^1\wedge\cdots\wedge f^k\equiv 0$.

b) If $\dim f^{-1}(H_i)\cap f^{-1}(H_j) \le m-2 \quad (1 \le i<j \le q)$ and 
$$q> 2N-n+1+\frac{kn(2N-n+1)}{(k-1)N(n+1)}+\frac{kn^2\rho}{N}$$
 then $f^1\wedge\cdots\wedge f^k\equiv 0$.
\end{theorem}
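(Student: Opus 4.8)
The plan is to argue by contradiction: assume $f^1\wedge\cdots\wedge f^k\not\equiv 0$, choose an index subset $\{i_0,\dots,i_{k-1}\}$ so that the wedge $f^{i_0}\wedge\cdots\wedge f^{i_{k-1}}$ is generically of full rank $k$, and derive a second main theorem type inequality that contradicts the lower bound on $q$. The vehicle is an auxiliary holomorphic (or meromorphic) function built from a Cartan-type combination of the coordinate representations of $f^1,\dots,f^k$, whose zero divisor collects, with multiplicity, all the common intersection points $f^{-1}(H_i)$; since all $f^j$ agree on $\bigcup_i f^{-1}(H_i)$ and have the same truncated pull-back divisors, each such point contributes simultaneously to every $f^j$, which is what forces the count to blow up. One passes from $M$ to the universal covering $\B^m(R_0)$ (the condition $(C_\rho)$ and differentiable nondegeneracy are preserved there in the usual way, as in Fujimoto's work), proves the inequality on $\B^m(R_0)$, and descends.

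**Key steps.** (1) Reduce to $\B^m(R_0)$ and fix reduced representations $\tilde f^j=(f^j_0:\cdots:f^j_n)$. (2) For a generic pair of indices (or the chosen $k$-tuple) form the ``Cartan'' auxiliary functions, e.g.\ for two maps $f^s,f^t$ set $F^{st}_{il}=(f^s,H_i)(f^t,H_l)-(f^s,H_l)(f^t,H_i)$; using conditions (a) and (b) show that $\nu_{(f,H_i)}$ (truncated appropriately, here to level $1$ suffices because of the assumption $f^j\in\mathcal D(\cdot,1)$) is absorbed into the zero divisor of a suitable product of these functions for \emph{every} $i$, so that $\sum_{i=1}^q \min\{\nu_{(f,H_i)},1\}$ is dominated by a counting function of a single holomorphic object. (3) Apply the second main theorem for differentiably nondegenerate meromorphic mappings on $\B^m(R_0)$ under $(C_\rho)$ — the Nochka-weight version for $N$-subgeneral position — to get, for each $f^j$,
\begin{equation*}
(q-2N+n-1)T_{f^j}(r)\le \sum_{i=1}^q N^{[n]}_{(f^j,H_i)}(r)+ (\text{small term}) + n\rho\, T_{f^j}(r)+S(r),
\end{equation*}
where the $n\rho\,T$ contribution is exactly the price of the curvature condition $(C_\rho)$ (this is where $\rho$, and in part (b) the sharper exponent $n^2/N$ via dimension-of-intersection arguments, enters). (4) Combine the $k$ copies of this inequality with the divisor bound from step (2): because the common zeros are shared, one gains a factor roughly $\frac{k-1}{k}$ (or $\frac{(k-1)N}{kn}$ under the dimension hypothesis) in front of the ramification-counting sum, producing an inequality of the form $(q-2N+n-1)\sum_j T_{f^j}(r)\le \frac{1}{k-1}\cdot\frac{2N-n+1}{n+1}\sum_j T_{f^j}(r)+n\rho\sum_j T_{f^j}(r)+S(r)$; dividing through yields precisely the threshold in (a), and the refined version yields (b). (5) If the strict inequality on $q$ holds, the coefficient of $\sum_j T_{f^j}(r)$ on the left exceeds that on the right, forcing $T_{f^j}(r)=S(r)$, contradicting non-constancy — hence $f^1\wedge\cdots\wedge f^k\equiv 0$.

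**Main obstacle.** The delicate point is step (2)–(4): showing that the shared truncated divisors really do force the extra $\frac{k}{(k-1)(n+1)}(2N-n+1)$ saving, i.e.\ setting up the right Cartan auxiliary function (a determinant in the $k$ maps, or an inductive pairing of them) so that each hyperplane $H_i$ contributes its full $\min\{\nu_{(f,H_i)},1\}$ to the zeros of that function with the correct multiplicity bound, while keeping the characteristic function of the auxiliary object under control by $\sum_j T_{f^j}(r)$ up to $S(r)$. The bookkeeping of Nochka weights in $N$-subgeneral position, and ensuring the $(C_\rho)$ error term is distributed correctly over the $k$ maps (so that it appears as $kn\rho$, resp.\ $kn^2\rho/N$, rather than something larger), is where the real work lies; the dimension condition $\dim f^{-1}(H_i)\cap f^{-1}(H_j)\le m-2$ in part (b) is used exactly to replace a crude bound by the sharper $n/N$-type factor in the second main theorem, following the standard improvement available in that situation.
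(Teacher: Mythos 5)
Your choice of auxiliary object and the divisor bookkeeping are in the right spirit: the paper indeed takes the $k\times k$ determinant $P=\det(f^u_{i_v})$ (nonzero exactly when $f^1\wedge\cdots\wedge f^k\not\equiv 0$), subtracts columns to see that $P$ vanishes to order at least $k-1$ along $\bigcup_i f^{-1}(H_i)$, and so obtains $\nu_P\ge(k-1)\nu^{[1]}_{(f,D)}=\tfrac{k-1}{k}\sum_u\nu^{[1]}_{(f^u,D)}$, which is your ``factor $\tfrac{k-1}{k}$'' gain (with the $n/N$ Nochka-weight refinement giving part (b)). But there is a genuine gap in how you propose to turn this into a contradiction. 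Your steps (3)--(5) invoke a second main theorem of the form $(q-2N+n-1)T_{f^j}(r)\le\sum_i N^{[n]}_{(f^j,H_i)}(r)+n\rho T_{f^j}(r)+S(r)$, then divide by $\sum_j T_{f^j}(r)$ and conclude $T_{f^j}(r)=S(r)$. No such inequality is available here: when $R_0<\infty$ (the ball of finite radius, which is the whole point of the Kähler hypothesis), a non-constant meromorphic map can have \emph{bounded} characteristic function, the error term $S(r)$ cannot be declared small relative to $T_{f^j}(r)$, and ``$T_{f^j}(r)=S(r)$ contradicts non-constancy'' is false. This is precisely the obstruction the paper flags in its introduction as the reason the classical Cartan-auxiliary-function method fails on Kähler manifolds.

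What the paper does instead, and what your proposal is missing, is the content of its Lemma \ref{lem3.2}: one shows $P\in B(1,0;f^1,\ldots,f^k)$ (i.e.\ $|P|\le C\|f^1\|\cdots\|f^k\|$), and then, in the finite-radius case, uses the hypothesized lower bound on $q$ to build a plurisubharmonic function $\varphi=\varphi_1+\cdots+\varphi_k+t\log\phi$ (combining the $(C_\rho)$ potentials with the logarithm of a product of Wronskian quotients $\zeta_u$ and of $|z^\beta h|^{1/\lambda}$), and shows via H\"older's inequality and the ``small integration'' estimates that $\int_{\B^m(1)}e^{\varphi}dV<\infty$, contradicting the results of Yau and Karp on complete Kähler manifolds. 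The counting-function argument you describe is used only in the subcase $R_0=+\infty$ (and in case (b) of the proof where $T$ genuinely dominates $\log\frac{1}{1-r}$). Without the bounded/small-integration machinery and the Yau--Karp step, your argument does not close in the finite-radius case. (Two smaller points: the truncation in your step (3) should be to level $1$, not $n$ --- that is what differentiable nondegeneracy buys via the first-order Wronskian, cf.\ Lemma \ref{lem3.1} --- and the coefficient in your step (4) should be $\frac{k}{k-1}\cdot\frac{2N-n+1}{n+1}$, not $\frac{1}{k-1}\cdot\frac{2N-n+1}{n+1}$.)
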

Letting $k=2$, we immediately get the following uniqueness theorem.
\begin{corollary}
Let $M,f,H_i (1\le i\le q),\rho$ be as in Theorem \ref{thm1.1}. 

a) If $q> 2N-n+1+\dfrac{2(2N-n+1)}{(n+1)}+2n\rho$ then $\sharp\mathcal{D}(f,\{H_i\}_{i=1}^{q},\rho,1)=1$.

b) If $\dim f^{-1}(H_i)\cap f^{-1}(H_j) \le m-2 \quad (1 \le i<j \le q)$ and
$$q> 2N-n+1+\frac{2n(2N-n+1)}{N(n+1)}+\frac{2n^2\rho}{N}$$
 then $\sharp\mathcal{D}(f,\{H_i\}_{i=1}^{q},1)=1$.
\end{corollary}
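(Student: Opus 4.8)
The plan is to prove Theorem~\ref{thm1.1}, from which the Corollary follows by taking $k=2$: any two elements $f^1,f^2$ of $\mathcal D(f,\{H_i\}_{i=1}^q,\rho,1)$ then satisfy $f^1\wedge f^2\equiv0$, which forces the two reduced representations to be pointwise proportional, hence $f^1=f^2$ as mappings into $\P^n(\C)$ and $\sharp\mathcal D(f,\{H_i\}_{i=1}^q,\rho,1)=1$. To prove the theorem I would first pass to the universal covering $\pi\colon\B^m(R_0)\to M$: each $F^t:=f^t\circ\pi$ is a differentiably nondegenerate meromorphic mapping of $\B^m(R_0)$ into $\P^n(\C)$ with a reduced representation $\widetilde F^t=(\widetilde F^t_0:\cdots:\widetilde F^t_n)$, the condition $(C_\rho)$ (with its bounded continuous function $h$) is inherited, and the sharing conditions (a),(b) pull back, so that $\nu^{[1]}_{(\widetilde F^t,H_i)}=\nu^{[1]}_{(\widetilde F^1,H_i)}=:\mu_i$ for all $t,i$ and $\widetilde F^s=\widetilde F^t$ up to a common nonvanishing scalar on $\bigcup_i(\widetilde F^1)^{-1}(H_i)$. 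On $\B^m(R_0)$ I would then invoke the Second Main Theorem for differentiably nondegenerate mappings satisfying $(C_\rho)$ against hyperplanes in $N$-subgeneral position, which yields for each $t$ an estimate of the shape
$$C(q,N,n,\rho)\,T_{F^t}(r)\ \le\ \sum_{i=1}^q N^{[1]}_{(\widetilde F^t,H_i)}(r)+o(T_{F^t}(r))=\sum_{i=1}^q N_{\mu_i}(r)+o(T_{F^t}(r)),$$
with $C(q,N,n,\rho)\to+\infty$ as $q\to\infty$, the $\rho$-dependence linear and entering through $h$, and the improved version of part~b) available when the intersection loci $f^{-1}(H_i)\cap f^{-1}(H_j)$ have codimension $\ge2$.

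\textbf{The auxiliary determinants.} Assume for contradiction $f^1\wedge\cdots\wedge f^k\not\equiv0$; then every subcollection of $F^1,\dots,F^k$ is linearly independent. Given $2\le j\le k$, a $j$-element index set $\{t_1,\dots,t_j\}\subset\{1,\dots,k\}$, and $j$ hyperplanes $H_{i_1},\dots,H_{i_j}$ chosen in general position — possible since $j\le k\le n+1$ and the $H_l$ are in $N$-subgeneral position — form the holomorphic function $\Phi:=\det\bigl((\widetilde F^{t_a},H_{i_b})\bigr)_{1\le a,b\le j}$ on $\B^m(R_0)$. Then $\Phi\not\equiv0$, and the key point is that $\Phi$ vanishes on $\bigcup_{i=1}^q(\widetilde F^1)^{-1}(H_i)$: at a generic point of $(\widetilde F^1)^{-1}(H_i)$ with $i\in\{i_1,\dots,i_j\}$ the $b$th column of the matrix vanishes by (a), while at a generic point with $i\notin\{i_1,\dots,i_j\}$ all of $\widetilde F^{t_1},\dots,\widetilde F^{t_j}$ are proportional by (b), so the rows of the matrix are proportional. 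Hence $\nu_\Phi\ge\sum_{i=1}^q\mu_i^{[1]}$ away from the codimension~$\ge2$ indeterminacy and multiple-intersection loci, giving $N_\Phi(r)\ge\sum_{i=1}^q N_{\mu_i}(r)-o(\max_tT_{F^t}(r))$; and since $\Phi$ is an $O(1,\dots,1)$-expression in the $\widetilde F^{t_a}$, the First Main Theorem gives $N_\Phi(r)\le\sum_{a=1}^jT_{F^{t_a}}(r)+o(\max_tT_{F^t}(r))$.

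\textbf{Combining.} Choosing the kept maps and the hyperplanes economically and summing the resulting inequalities over a balanced family of subsets so that each $T_{F^t}(r)$ carries the same total coefficient, the upper bounds for the various $N_\Phi$ combine with the $k$ Second Main Theorem inequalities of the first step; dividing by $\sum_tT_{F^t}(r)$ and letting $r\to\infty$ produces an inequality $q\le(\text{the threshold of part a})$, which the hypothesis of part~a) violates — a contradiction, so $f^1\wedge\cdots\wedge f^k\equiv0$. For part~b), the hypothesis $\dim f^{-1}(H_i)\cap f^{-1}(H_j)\le m-2$ removes the contribution of the pairwise intersection loci to the error terms and permits the sharper Second Main Theorem — the one whose correction terms carry the factor $\tfrac{n}{N}$ — to be used throughout, and the same computation then gives the threshold of part~b).

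\textbf{Main obstacle.} The delicate part is the numerical bookkeeping: one needs the Second Main Theorem for differentiably nondegenerate mappings on the Kähler manifold with exactly the constants occurring here — the $2N-n+1$ of $N$-subgeneral position, the $(n+1)$ coming from the ambient dimension, and the precise linear $\rho$-term from $(C_\rho)$ — and one must arrange the determinants $\Phi$ and sum their counting-function estimates so that the combinatorial coefficient comes out as $\tfrac{k}{(k-1)(n+1)}$ rather than something weaker; making the factor $\tfrac{n}{N}$ propagate correctly in part~b) is a second such point. The remaining ingredients — the lift to $\B^m(R_0)$, the vanishing of $\Phi$ on the shared divisors, and the final linear-algebra conclusion — are routine.
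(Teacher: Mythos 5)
Your top-level reduction is the same as the paper's: the Corollary is obtained from Theorem \ref{thm1.1} with $k=2$, since $f^1\wedge f^2\equiv 0$ forces the reduced representations to be proportional and hence $f^1=f^2$. Your auxiliary determinant is also in the paper's spirit (the paper uses $P=\det(f^u_{i_v})$, shows $\nu_P\ge(k-1)\nu^{[1]}_{(f,D)}$ by column subtraction, and notes $P\in B(1,0;f^1,\ldots,f^k)$), and for $k=2$ your order-one vanishing of $\Phi$ on the shared divisors is the right count.

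The genuine gap is in the engine you invoke. You assume a Second Main Theorem on $\B^m(R_0)$ for differentiably nondegenerate maps satisfying $(C_\rho)$ with an error term $o(T_{F^t}(r))$, and you conclude by dividing by $\sum_t T_{F^t}(r)$ and letting $r\to\infty$. When $R_0<+\infty$ (the genuinely K\"{a}hler case, which the Corollary covers), no such inequality is available and the limiting argument is meaningless: $T_{F^t}(r,r_0)$ may stay bounded as $r\to R_0$, so a comparison of counting functions against characteristic functions with a $o(T)$ error yields nothing. This is exactly the obstruction the paper flags in the introduction and circumvents with the classes $S(l_0;\cdot)$ and $B(p,l_0;\cdot)$: in Lemma \ref{lem3.10} the condition $(C_\rho)$ is used not to produce an error term but to build a plurisubharmonic function $\varphi=\varphi_1+\cdots+\varphi_k+t\log\phi$ (with $t$ calibrated by the Nochka weight $\tilde\omega$ and the assumed lower bound on $q$) whose exponential is integrable over $\B^m(1)$ by H\"{o}lder plus the logarithmic-derivative estimate of Proposition \ref{3.4}, contradicting Yau and Karp; this is also where the linear $\rho$-term in the threshold actually comes from. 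Your sketch only reproduces the paper's case $R_0=+\infty$, where the classical Nevanlinna argument does apply, so as written the proof does not cover the Corollary's hypotheses. (A secondary issue, immaterial for $k=2$ but relevant to your stated plan of proving Theorem \ref{thm1.1} in general: your bound $\nu_\Phi\ge\sum_i\mu_i^{[1]}$ gives only order-one vanishing, whereas the coefficient $\tfrac{k}{(k-1)(n+1)}$ requires vanishing to order $k-1$ on the shared divisors, which the paper gets from the rank-one structure of the $k\times k$ matrix there.)
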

Here, by $\sharp S$ we denote the cardinality of the set $S$.

Remark: Suppose that $q=n+4$ and $\{H_i\}_{i=1}^{n+4}$ is in general position, i.e., $N=n$. Then
the assumption of the above corollary is fulfilled with $\rho<\frac{1}{2n}$. Then this result is an extension of the uniqueness theorem for differentiably non-degenerate meromorphic mappings into $\P^n(\C)$ sharing a normal crossing divisor of degree $n+4$ given firstly by Drouilhet \cite[Theorem 4.2]{D}.

We would like to emphasize here that, in order to study the finiteness problem of meromorphic mappings for the case of mappings from $\C^m,$  almost all authors use Cartan's auxialiary functions (see Definition \ref{2.2}) and compare the counting functions of these auxialiary functions with the characteristic functions of the mappings. However, in the general case of K\"{a}hler manifold, this method may do not work since this comparation does not make sense if the growth of the characteristic functions do not increase quickly enough. In order to overcome this difficulty, in \cite{Q20}, we introduced the notions of ``functions of small integration'' and ``functions of bounded integration''. Using this notions, we will extend the finiteness theorems for differentiably non-degenerate meromorphic mappings of $\C^m$ into $\P^n(\C)$ sharing $n+3$ hyperplanes (see \cite{Q14}) to the case of K\"{a}hler manifolds. Our last result is stated as follows.
\begin{theorem}\label{thm3}
Let $M$ be an $m$-dimensional connected K\"{a}hler manifold whose universal covering is biholomorphic to a ball $\B^m(R_0)$ in $\C^m$\ ($0<R_0\le +\infty$), and let $f$ be a differentiably non-degenerate meromorphic mapping of $M$ into $\P^n(\C)\ (m\ge n)$ satisfying the condition $(C_\rho)$ for a positive number $\rho$. Let $H_1,\ldots,H_q$ be $q$ hyperplanes of $\P^n(\C)$ in $N-$subgeneral possition such that
$$\dim f^{-1}(H_i)\cap f^{-1}(H_j) \le m-2 \quad (1 \le i<j \le q).$$ 
Assume that 
$$q>2N-n+1+\frac{9n(2N-n+1)}{5N(n+1)}+\rho\left (3n+\dfrac{9n}{5N}\right ).$$
Then $\sharp \mathcal{D}(f,\{H_i\}_{i=1}^q,\rho,2)\le 2$.
\end{theorem}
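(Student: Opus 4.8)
The plan is to argue by contradiction: assume $\#\mathcal{D}(f,\{H_i\}_{i=1}^q,\rho,2)\ge 3$ and choose three pairwise distinct elements $f^1,f^2,f^3$ of this set, aiming to contradict the lower bound on $q$. After fixing reduced representations and lifting everything to the universal covering $\mathbb{B}^m(R_0)$ — condition $(C_\rho)$ being precisely what keeps the Nevanlinna-theoretic estimates admissible after this lifting — one first notes that $f^\alpha\wedge f^\beta\equiv 0$ would give $f^\alpha=f^\beta$, against our choice, so all three pairwise wedge products are nonzero. Next, the numerical hypothesis of Theorem \ref{thm3} is strictly stronger than that of Theorem \ref{thm1.1}(b) with $k=3$: for the leading term $\tfrac95>\tfrac32$, and for the $\rho$-term $3n+\tfrac{9n}{5N}\ge\tfrac{3n^2}{N}$ since $N\ge n$; moreover the dimension hypothesis of Theorem \ref{thm3} is exactly the one required in Theorem \ref{thm1.1}(b). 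Hence Theorem \ref{thm1.1}(b) applies and yields $f^1\wedge f^2\wedge f^3\equiv 0$, so there are meromorphic functions $c_1,c_2,c_3$ on $M$ — none of them identically zero, by pairwise nondegeneracy — with $c_1(f^1,H_i)+c_2(f^2,H_i)+c_3(f^3,H_i)\equiv 0$ for every $i\in\{1,\dots,q\}$.

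The combinatorial core is a classification of $\{1,\dots,q\}$ via Cartan's auxiliary functions of Definition \ref{2.2}. For a pair of maps, say $(f^1,f^3)$, put $i\sim j$ when the auxiliary function attached to $(f^1,f^3;H_i,H_j)$ vanishes identically, i.e. when $(f^1,H_i)/(f^3,H_i)\equiv(f^1,H_j)/(f^3,H_j)$; here the choice $d=2$ in $\mathcal{D}(f,\{H_i\}_{i=1}^q,\rho,2)$ is essential, as it forces the zero and pole divisors of the ratios $(f^\alpha,H_i)/(f^\beta,H_i)$ to be supported only where the pullback multiplicities are at least $2$, so these ratios are ``small'' in the sense of \cite{Q20}. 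If some $\sim$-class $I$ had at least $N+1$ indices, then $\{H_i\}_{i\in I}$, lying in $N$-subgeneral position, would contain $n+1$ hyperplanes with linearly independent defining forms; on those $n+1$ indices $(f^1,H_i)$ and $(f^3,H_i)$ would be proportional by one and the same meromorphic factor, forcing $f^1=f^3$, a contradiction. Hence every $\sim$-class has at most $N$ elements, so there are at least $q/N$ of them. The same classification is carried out for $(f^1,f^2)$ and $(f^2,f^3)$, and the common relation $c_1(f^1,H_i)+c_2(f^2,H_i)+c_3(f^3,H_i)\equiv 0$ couples the three partitions.

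With the class structure in hand I would run the quantitative estimate as follows. For indices $i,j$ in distinct classes the relevant auxiliary function $\Phi_{ij}$ is a nonzero \emph{holomorphic} function, so on the one hand $T_{\Phi_{ij}}(r)$ is dominated by $T_{f^1}(r)+T_{f^3}(r)+O(1)$ — and, in the Kähler setting, $\Phi_{ij}$ is a function of bounded integration with respect to $T_{f^1}+T_{f^3}$ in the terminology of \cite{Q20}; on the other hand, using condition (a) with $d=2$ together with $\dim f^{-1}(H_i)\cap f^{-1}(H_j)\le m-2$ — which guarantees the divisorial contributions are disjoint in codimension one — $\Phi_{ij}$ vanishes along $f^{-1}(H_i)\cup f^{-1}(H_j)$ to order at least $\nu^{[2]}_{(f,H_i)}+\nu^{[2]}_{(f,H_j)}$, whence $N^{[2]}_{(f,H_i)}(r)+N^{[2]}_{(f,H_j)}(r)\le T_{\Phi_{ij}}(r)+O(1)$. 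Summing such inequalities over a carefully chosen system of cross-class pairs (so that each of the $q$ indices is used with the right multiplicity, exploiting that every class has at most $N$ members), together with the Second Main Theorem with truncation level one for differentiably nondegenerate meromorphic mappings on Kähler manifolds under $(C_\rho)$ — the same ingredient underlying Theorem \ref{thm1.1}, applied to each of $f^1,f^2,f^3$ and added — produces a numerical inequality whose optimization gives exactly $q\le 2N-n+1+\tfrac{9n(2N-n+1)}{5N(n+1)}+\rho\bigl(3n+\tfrac{9n}{5N}\bigr)$, contradicting the hypothesis. The choice of weights in this summation is what produces the precise coefficients $\tfrac95$ and $3n+\tfrac{9n}{5N}$.

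The step I expect to be the main obstacle is not the combinatorics but the analysis peculiar to general Kähler manifolds: the characteristic functions $T_{f^\alpha}(r)$ need not grow fast enough for the customary ``$+o(T_f(r))$'' error terms (coming from the lemma on the logarithmic derivative and from Jensen's formula) to be usable, so \emph{every} auxiliary function and logarithmic-derivative term must instead be controlled through the ``functions of small integration''/``functions of bounded integration'' formalism of \cite{Q20}, the $\rho$-dependence entering solely through applying condition $(C_\rho)$ to each $f^\alpha$. Making these estimates interlock with the combinatorial bookkeeping tightly enough to reach the stated constant — in particular getting $\tfrac95$ rather than a larger fraction — is the delicate part of the argument.
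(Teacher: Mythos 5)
Your reduction is fine as far as it goes: assuming three distinct maps, checking that the hypothesis of Theorem \ref{thm3} implies that of Theorem \ref{thm1.1}(b) with $k=3$ (via $\tfrac95>\tfrac32$ and $3n+\tfrac{9n}{5N}\ge\tfrac{3n^2}{N}$), and concluding $f^1\wedge f^2\wedge f^3\equiv 0$ --- a step the paper actually uses silently inside Claim \ref{cl4.4}. The gap is at the quantitative heart of the argument. You classify indices by the \emph{pairwise} relation $(f^1,H_i)/(f^3,H_i)\equiv(f^1,H_j)/(f^3,H_j)$ and propose to sum, over an unspecified ``carefully chosen system of cross-class pairs,'' inequalities of the form $N^{[2]}_{(f,H_i)}+N^{[2]}_{(f,H_j)}\le T_{\Phi_{ij}}+O(1)$, where your $\Phi_{ij}$ is a $2\times 2$-determinant-type function bounded by $\|f^1\|\,\|f^3\|$. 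That is the mechanism of the $k=2$ uniqueness statement and can only deliver coefficients of the shape occurring in the Corollary to Theorem \ref{thm1.1}; the claim that its ``optimization gives exactly'' $\tfrac{9n(2N-n+1)}{5N(n+1)}$ is asserted, not derived, and I do not see how pairwise comparisons could produce it. What actually produces these constants in the paper is the genuine three-function Cartan auxiliary function of Definition \ref{2.2}, $\Phi^\alpha_{ic}=\Phi^\alpha(F^{ic}_1,F^{ic}_2,F^{ic}_3)$ with $|\alpha|=1$ (one derivative, all three maps at once): if at least three indices $i$ admit a nonvanishing $\Phi^\alpha_{ic}$, Lemma \ref{lem4.1} yields holomorphic $g_1,g_2,g_3$ with $g_1g_2g_3\in B(3,3;f^1,f^2,f^3)$ and $\nu_{g_1g_2g_3}\ge\tfrac53\sum_{u,i}\nu^{[1]}_{(f^u,H_i)}$, and Lemma \ref{lem3.2} applied with $p=l_0=3$, $\lambda=\tfrac53$, $k=3$ is precisely what turns $p/\lambda=\tfrac95$ and $l_0/\lambda=\tfrac95$ into the stated bound. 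None of this is present in your proposal, and your closing paragraph in effect concedes that reaching the constant $\tfrac95$ is still open.

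A second omission is the degenerate branch. When fewer than three indices carry a nonvanishing auxiliary function, the contradiction cannot come from counting at all: the paper then invokes Fujimoto's Lemma \ref{2.3} to get, for all $i<j\le q-2$, either $F^{ij}_u=F^{ij}_v$ for some pair or constant ratios; Claim \ref{cl4.4} upgrades $F^{ij}_1=F^{ij}_2$ to $F^{ij}_1=F^{ij}_2=F^{ij}_3$ using $f^1\wedge f^2\wedge f^3\equiv0$; and a separate argument excludes constant ratios $\ne 1$ (it forces $\bigcup_{s\ne i}(f,H_s)^{-1}\{0\}=\emptyset$, contradicting Lemma \ref{lem3.2} again). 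Your observation that each $\sim$-class has at most $N$ elements, hence there are at least $q/N$ classes, does not substitute for this: it only rules out a single class and says nothing about how the three pairwise partitions interact, nor does it handle the case where the relevant auxiliary functions vanish identically for almost all indices. As it stands the proposal identifies the right toolbox (condition $(C_\rho)$, small/bounded integration, Cartan auxiliary functions) but not the actual argument.
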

Remark: Suppose that $q=n+3$ and $\{H_i\}_{i=1}^{n+3}$ is in general position. Then the assumption of the above theorem is fulfilled with $\rho<\dfrac{1}{15n+9}$. Then this result is an extension of the finiteness theorems for differentiably non-degenerate meromorphic mappings into $\P^n(\C)$ sharing $n+3$ hyperplanes in general position of Quang \cite[Theorems 1.1,1.2,1.3]{Q14}.

\section{Basic notions and auxiliary results from the distribution theory}
 In this section, we recall some notations from the distribution value theory of meromorphic mappings on a ball $\B^m(\C)$ in $\C^m$ due to \cite{Q19,Q20}.

\noindent
{\bf 2.1. Counting function.}\ We set $\|z\| = \big(|z_1|^2 + \dots + |z_m|^2\big)^{1/2}$ for
$z = (z_1,\dots,z_m) \in \mathbb C^m$ and define
\begin{align*}
\B^m(R) &:= \{ z \in \mathbb C^m : \|z\| < R\}\ \   (0<R\le \infty),\\
S(R) &:= \{ z \in \mathbb C^m : \|z\| =R\}\ (0<R<\infty).
\end{align*}
Define 
$$v_{m-1}(z) := \big(dd^c \|z\|^2\big)^{m-1}\quad \quad \text{and}$$
$$\sigma_m(z):= d^c \text{log}\|z\|^2 \land \big(dd^c \text{log}\|z\|^2\big)^{m-1}
 \text{on} \quad \mathbb C^m \setminus \{0\}.$$

 For a divisor $\nu$ on  a ball $\B^m(R_0)$ of $\mathbb C^m$, and for a positive integer $p$ or $p= \infty$, we define the truncated counting function of $\nu$ by
$$n(t,\nu) =
\begin{cases}
\int\limits_{|\nu|\,\cap \B(t)}
\nu(z) v_{m-1} & \text  { if } m \geq 2,\\
\sum\limits_{|z|\leq t} \nu (z) & \text { if }  m=1. 
\end{cases}
$$
and define $n^{[p]}(t):=n(t,\nu^{[p]}),$ where $\nu^{[p]}=\min\{p,\nu\}.$

Define
$$ N(r,r_0,\nu)=\int\limits_{r_0}^r \dfrac {n(t)}{t^{2m-1}}dt \quad (0<r_0<r<R).$$

Similarly, define $N(r,r_0,\nu^{[p]})$ and denote it by $N^{[p]}(r,r_0,\nu)$.

Let $\varphi : \mathbb B^m(R_0) \longrightarrow \overline\C $ be a meromorphic function. Denote by $\nu_\varphi$ (res. $\nu^{0}_{\varphi}$) the divisor (resp. the zero divisor) of $\varphi$. Define
$$N_{\varphi}(r,r_0)=N(r,r_0,\nu^{0}_{\varphi}), \ N_{\varphi}^{[p]}(r,r_0)=N(r,r_0,(\nu^{0}_{\varphi})^{[p]}).$$
For brevity, we will omit the character $^{[p]}$ if $p=\infty$.

\vskip0.2cm
\noindent
{\bf 2.2. Characteristic function.}\ Throughout this paper, we fix a homogeneous coordinates system $(x_0:\cdots :x_n)$ on $\P^n(\C)$. Let $f : \mathbb B^m(R_0) \longrightarrow \mathbb P^n(\mathbb C)$ be a meromorphic mapping with a reduced representation $f = (f_0, \ldots,f_n)$, which means that each $f_i$ is a holomorphic function on $\B^m(R_0)$ and  $f(z) = \big(f_0(z) : \dots : f_n(z)\big)$ outside the indeterminancy locus $I(f)$ of $f$. Set $\|f \| = \big(|f_0|^2 + \dots + |f_n|^2\big)^{1/2}$.

The characteristic function of $f$ is defined by 
$$ T_f(r,r_0)=\int_{r_0}^r\dfrac{dt}{t^{2m-1}}\int\limits_{B(t)}f^*\Omega\wedge v^{m-1}, \ (0<r_0<r<R_0). $$
By Jensen's formula, we have
\begin{align*}
T_f(r,r_0)= \int\limits_{S(r)} \log\|f\| \sigma_m -
\int\limits_{S(r_0)}\log\|f\|\sigma_m +O(1), \text{ (as $r\rightarrow R_0$)}.
\end{align*}

If $R_0=+\infty$, we always choose $r_0=1$ and write $N_\varphi (r),N^{[p]}_\varphi (r),T_f(r)$ for $N_\varphi (r,1),$ $N^{[p]}_\varphi (r,1),T_f(r,1)$ as usual.
 
\vskip0.2cm
\noindent
{\bf 2.3. Auxiliary results.}\ Repeating the argument in \cite[Proposition 4.5]{Fu85}, we have the following.

\begin{proposition}\label{2.1}
Let $F_0,\ldots ,F_{l-1}$ be meromorphic functions on the ball $\B^{m}(R_0)$ in $\mathbb C^m$ such that $\{F_0,\ldots ,F_{l-1}\}$ are  linearly independent over $\mathbb C.$ Then  there exists an admissible set  
$$\{\alpha_i=(\alpha_{i1},\ldots,\alpha_{im})\}_{i=0}^{l-1} \subset \mathbb N^m,$$
which is chosen uniquely in an explicit way, with $|\alpha_i|=\sum_{j=1}^{m}|\alpha_{ij}|\le i \ (0\le i \le l-1)$ such that:

(i)\  $W_{\alpha_0,\ldots ,\alpha_{l-1}}(F_0,\ldots ,F_{l-1})\overset{Def}{:=}\det{({\mathcal D}^{\alpha_i}\ F_j)_{0\le i,j\le l-1}}\not\equiv 0.$ 

(ii) $W_{\alpha_0,\ldots ,\alpha_{l-1}}(hF_0,\ldots ,hF_{l-1})=h^{l+1}W_{\alpha_0,\ldots ,\alpha_{l-1}}(F_0,\ldots ,F_{l-1})$ for any nonzero meromorphic function $h$ on $\B^m(R_0).$
\end{proposition}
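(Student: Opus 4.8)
The plan is to prove the two assertions separately: the existence and uniqueness of the admissible set realizing a non-vanishing generalized Wronskian (part (i)), and the homogeneity of that Wronskian under multiplication by a meromorphic function (part (ii)). For (i) I would argue by induction on $l$, reducing the several-variable statement to the classical one-variable Wronskian criterion by restricting the $F_j$ to a generic complex line; this reduction is what simultaneously produces the sharp degree bound $|\alpha_i|\le i$. To pin down the \emph{unique, explicit} choice of $\{\alpha_i\}$, I would fix once and for all the graded-lexicographic well-order $\preceq$ on $\N^m$ (so that $\alpha\prec\beta$ whenever $|\alpha|<|\beta|$), and select the admissible set greedily so that it becomes an order ideal (lower set) for the coordinatewise partial order.

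For the existence in (i), the base case $l=1$ is immediate: linear independence of the single function $F_0$ means $F_0\not\equiv 0$, so $\alpha_0=0$ works and $|\alpha_0|=0$. For the inductive step, suppose $\alpha_0,\dots,\alpha_{i-1}$ have been chosen forming a lower set with $|\alpha_j|\le j$ and $W_{\alpha_0,\dots,\alpha_{i-1}}(F_0,\dots,F_{i-1})\not\equiv 0$. I want a minimal $\alpha_i$ keeping the lower-set property and making the full $(i+1)\times(i+1)$ Wronskian non-zero. Expanding that determinant along its last row shows that, were it to vanish identically for every admissible candidate $\alpha$, the functions $F_0,\dots,F_i$ would satisfy $\sum_{j=0}^{i} c_j\,\mathcal{D}^\alpha F_j\equiv 0$ for all such $\alpha$, where the cofactors $c_j$ are fixed meromorphic functions with $c_i\not\equiv 0$. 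Restricting to a generic complex line $t\mapsto a+tv$ and using that $\dfrac{d^i}{dt^i}\big(F_j(a+tv)\big)=\sum_{|\alpha|=i}\binom{i}{\alpha}v^\alpha(\mathcal{D}^\alpha F_j)(a+tv)$, the ordinary one-variable Wronskian of the restrictions expands multilinearly into a combination of the generalized Wronskians $W_{\alpha_0,\dots,\alpha_i}$ with $|\alpha_j|\le j$. Since $\mathbb{C}$-linear independence is preserved on a generic line, the classical Wronskian criterion forces this ordinary Wronskian to be non-zero, hence at least one generalized Wronskian with $|\alpha_j|\le j$ is non-zero; this contradicts the assumed vanishing and yields the required $\alpha_i$ with $|\alpha_i|\le i$. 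Taking the $\preceq$-minimal valid choice at each stage makes the whole construction canonical, proving uniqueness and explicitness.

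For (ii), I would apply the generalized Leibniz rule $\mathcal{D}^{\alpha_i}(hF_j)=\sum_{0\le\beta\le\alpha_i}\binom{\alpha_i}{\beta}(\mathcal{D}^\beta h)(\mathcal{D}^{\alpha_i-\beta}F_j)$ row by row and expand the determinant by multilinearity in its rows. This produces a sum over tuples $(\beta_0,\dots,\beta_{l-1})$ of $\big(\prod_i\binom{\alpha_i}{\beta_i}\mathcal{D}^{\beta_i}h\big)\det(\mathcal{D}^{\alpha_i-\beta_i}F_j)_{i,j}$. The term with all $\beta_i=0$ gives exactly $h$ raised to the number of rows times $W_{\alpha_0,\dots,\alpha_{l-1}}(F_0,\dots,F_{l-1})$, one factor of $h$ being peeled from each row. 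Every remaining term has $\sum_i|\alpha_i-\beta_i|<\sum_i|\alpha_i|$; because the admissible set is a lower set, each index $\alpha_i-\beta_i$ again belongs to $\{\alpha_0,\dots,\alpha_{l-1}\}$, so the strict drop in total degree forces, by a pigeonhole count, two of the rows $(\mathcal{D}^{\alpha_i-\beta_i}F_j)_j$ to coincide and the corresponding determinant to vanish. Hence only the leading term survives and (ii) follows, with the power of $h$ equal to the size of the determinant.

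The hard part will be the existence statement in (i) together with the sharp bound $|\alpha_i|\le i$. The subtlety is that the $F_j$ are assumed independent only over $\mathbb{C}$, not over the field of meromorphic functions, so a naive cofactor expansion yields only a relation with meromorphic coefficients; converting this into a genuine contradiction with $\mathbb{C}$-independence is exactly what the reduction to a generic line and the classical one-variable Wronskian criterion accomplish. A secondary technical point to verify carefully is that $\mathbb{C}$-linear independence of $F_0,\dots,F_{l-1}$ is indeed inherited by their restrictions to a generic line, and that the greedy $\preceq$-minimal selection genuinely yields a lower set, since the lower-set property is precisely what makes the cancellation in (ii) work.
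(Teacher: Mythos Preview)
The paper does not supply its own proof of this proposition: it simply writes ``Repeating the argument in \cite[Proposition~4.5]{Fu85}, we have the following,'' and states the result. So there is no in-paper argument to compare against; the benchmark is Fujimoto's original construction.

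Your overall strategy---restriction to a generic complex line to reduce (i) to the classical one-variable Wronskian criterion, and a Leibniz/multilinearity expansion for (ii)---is exactly the standard one and is the route Fujimoto takes. Two points deserve comment.

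First, there is a genuine gap in your inductive step for (i). You fix $\alpha_0,\dots,\alpha_{i-1}$ and suppose that $W_{\alpha_0,\dots,\alpha_{i-1},\alpha}(F_0,\dots,F_i)\equiv 0$ for every admissible $\alpha$; you then invoke the generic-line expansion of the one-variable Wronskian. But that expansion produces a combination of \emph{arbitrary} generalized Wronskians $W_{\alpha_0',\dots,\alpha_i'}$ with $|\alpha_j'|\le j$, not only those extending the particular prefix $(\alpha_0,\dots,\alpha_{i-1})$ you have already committed to. So the nonvanishing of the one-variable Wronskian contradicts nothing you have assumed. The usual fix is to drop the lower-set constraint from the \emph{existence} argument: one first shows (via the line restriction, or by the chain $\alpha_{i+1}-\alpha_i\in\{e_1,\dots,e_m\}$) that the rows $(\mathcal D^\alpha F_j)_j$ with $|\alpha|\le k$ have $\mathcal M$-rank at least $\min(k+1,l)$, and then performs the greedy graded-lex selection. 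With the greedy choice, any $\gamma$ that is coordinatewise below some $\alpha_i$ but was \emph{not} selected has its row already in the $\mathcal M$-span of the selected rows preceding it; this weaker ``saturation'' property (rather than the literal lower-set property) is what actually drives the cancellation in (ii). Your sketch conflates the two, and as written the induction does not close.

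Second, a minor observation: your argument for (ii) correctly shows that the power of $h$ equals the number of rows of the determinant, namely $l$. The exponent $l+1$ printed in the statement is a typo carried over from the paper; the correct identity is $W_{\alpha_0,\dots,\alpha_{l-1}}(hF_0,\dots,hF_{l-1})=h^{\,l}\,W_{\alpha_0,\dots,\alpha_{l-1}}(F_0,\dots,F_{l-1})$.
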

The function $W_{\alpha_0,\ldots ,\alpha_{l-1}}(F_0,\ldots ,F_{l-1})$ is called the general Wronskian of the mapping $F=(F_0,\ldots ,F_{l-1})$.

\begin{definition}[{Cartan's auxialiary function \cite[Definition 3.1]{Fu98}}]\label{2.2}
For meromorphic functions $F,G,H$ on $\B^m(R_0)$ and $\alpha =(\alpha_1,\ldots ,\alpha_m)\in \Z_+^m$, we define the Cartan's auxiliary function as follows:
$$
\Phi^\alpha(F,G,H):=F\cdot G\cdot H\cdot\left | 
\begin {array}{cccc}
1&1&1\\
\frac {1}{F}&\frac {1}{G} &\frac {1}{H}\\
\mathcal {D}^{\alpha}(\frac {1}{F}) &\mathcal {D}^{\alpha}(\frac {1}{G}) &\mathcal {D}^{\alpha}(\frac {1}{H})
\end {array}
\right|.
$$
\end{definition}

\begin{lemma}[{see \cite[Proposition 3.4]{Fu98}}]\label{2.3} If $\Phi^\alpha(F,G,H)=0$ and $\Phi^\alpha(\frac {1}{F},\frac {1}{G},\frac {1}{H})=0$ for all $\alpha$ with $|\alpha|\le 1$, then one of the following assertions holds:

(i) \ $F=G, G=H$ or $H=F$,

(ii) \ $\frac {F}{G},\frac {G}{H}$ and $\frac {H}{F}$ are all constant.
\end{lemma}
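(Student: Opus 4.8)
The plan is to reduce the three-by-three determinant defining $\Phi^\alpha$ to a two-by-two Wronskian-type expression in two difference functions, from which the constancy of an associated quotient will follow immediately in the case $|\alpha|=1$. First I would perform the column operations $C_1\mapsto C_1-C_3$ and $C_2\mapsto C_2-C_3$ on the determinant in Definition \ref{2.2}. Since $\mathcal{D}^\alpha$ is linear, setting $P:=\frac{1}{F}-\frac{1}{H}$ and $Q:=\frac{1}{G}-\frac{1}{H}$ and expanding along the resulting first row $(0,0,1)$ gives
$$\Phi^\alpha(F,G,H)=F\cdot G\cdot H\cdot\bigl(P\,\mathcal{D}^\alpha Q-Q\,\mathcal{D}^\alpha P\bigr).$$
For $\alpha=e_i$ a unit multi-index this is exactly $FGH\cdot P^2\,\partial_i(Q/P)$ by the quotient rule; likewise, replacing $F,G,H$ by $1/F,1/G,1/H$ and setting $\widetilde P:=F-H$, $\widetilde Q:=G-H$, one obtains $\Phi^{e_i}(1/F,1/G,1/H)=(FGH)^{-1}\widetilde P^2\,\partial_i(\widetilde Q/\widetilde P)$.

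Next I would set up the dichotomy. If two of $F,G,H$ coincide identically, assertion (i) holds and there is nothing to prove, so assume $F\not\equiv G$, $G\not\equiv H$ and $H\not\equiv F$. Then $P\not\equiv0$ and $\widetilde P\not\equiv0$, and since $FGH\not\equiv0$, the hypotheses $\Phi^{e_i}(F,G,H)=0$ and $\Phi^{e_i}(1/F,1/G,1/H)=0$ for every $i$ force $\partial_i(Q/P)=0$ and $\partial_i(\widetilde Q/\widetilde P)=0$ on a dense open subset of the connected ball $\B^m(R_0)$. Hence $Q/P$ and $\widetilde Q/\widetilde P$ are constants, say $a$ and $b$. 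Rewriting these two relations in terms of $F,G,H$ yields
$$\frac{F(H-G)}{G(H-F)}=a,\qquad \frac{H-G}{H-F}=b.$$

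Finally I would extract assertion (ii) from these two algebraic identities. Dividing the first relation by the second cancels the common factors $H-G$ and $H-F$ and gives $F/G=a/b$. The non-degeneracy of the constants is guaranteed by the standing assumption: $b=1$ would force $F\equiv G$, while $b=0$ or $a=0$ would force $G\equiv H$, each contradicting the current case; thus $a\neq0$ and $b\notin\{0,1\}$, so $F/G$ is a well-defined nonzero constant. Substituting $F=(a/b)G$ into $H-G=b(H-F)$ and solving for $H$ produces $H=\frac{1-a}{1-b}\,G$, whence $H/G$, and consequently $G/H$ and $H/F$, are all constant, which is assertion (ii). The most delicate point, and the one I would treat most carefully, is the bookkeeping of the degenerate cases: because the column reduction singled out the $H$-column, the two relations above are not symmetric in $F,G,H$, so I must check directly that each of $F\equiv G$, $G\equiv H$, $H\equiv F$ is correctly absorbed into alternative (i) and cannot slip past the constancy argument. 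Everything else reduces to the quotient rule together with the connectedness of $\B^m(R_0)$.
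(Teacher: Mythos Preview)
The paper does not supply its own proof of this lemma; it is quoted verbatim from \cite[Proposition 3.4]{Fu98} and invoked as a black box. Your argument is correct and essentially reconstructs Fujimoto's original reasoning: the column reduction collapses $\Phi^{e_i}$ to the $2\times 2$ Wronskian of the differences, the quotient rule turns vanishing of that Wronskian into constancy of $Q/P$ and $\widetilde Q/\widetilde P$, and the two resulting algebraic relations pin down all three ratios $F/G$, $G/H$, $H/F$ as constants once alternative~(i) is excluded. Your bookkeeping of the degenerate values $a\in\{0,1\}$, $b\in\{0,1\}$ is accurate (each forces a coincidence among $F,G,H$ and hence lands in~(i)), so nothing is missing.
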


\vskip0.2cm
\noindent
{\bf 2.3. Functions of small integration and bounded integration.}\ 
Let $f^1,f^2,\ldots,f^k$ be $k$ meromorphic mappings from the complete K\"{a}hler manifold $\B^m(R_0)$ into $\P^n(\C)$, which satisfies the condition $(C_\rho)$ for a non-negative number $\rho$. 
For each $1\le u\le k$, we fix a reduced representation $f^u=(f^u_0:\cdots :f^u_n)$ of $f^u$ and set $\|f^u\|=(|f^u|^2_0+\cdots+|f^u|^2_n)^{1/2}$.

We denote by $\mathcal C(\B^m(R_0))$ the set of all non-negative functions $g: \B^m(R_0)\to [0,+\infty]$ which are continuous outside an analytic set of codimension two (corresponding to the topology of the compactification $[0,+\infty]$) and only attain $+\infty$ in an analytic thin set.

\begin{definition}[{see \cite[Definition 2.2]{Q19} and \cite[Definition 3.1]{Q20}}]\label{3.1}
A function $g$ in $\mathcal C(\B^m(R_0))$ is said to be of small integration with respective to $f^1,\ldots,f^k$ at level $l_0$ if there exist an element $\alpha=(\alpha_1,\ldots,\alpha_m)\in\N^m$ with $|\alpha|\le l_0$,  a positive number $K$, such that for every $0\le tl_0<p<1$,
$$\int_{S(r)}|z^\alpha g|^t\sigma_m \le K\left(\frac{R^{2m-1}}{R-r}\sum_{u=1}^kT_{f^u}(r,r_0)\right)^p$$
for all $r$ with $0<r_0<r<R<R_0$, where $z^\alpha=z_1^{\alpha_1}\cdots z_m^{\alpha_m}$. 
\end{definition}

We denote by $S(l_0;f^1,\ldots,f^k)$ the set of all functions in $\mathcal C(\B^m(R_0))$ which are of small integration with respective to $f^1,\ldots,f^k$ at level $l_0$. We see that, if $g$ belongs to $S(l_0;f^1,\ldots,f^k)$ then $g$ is also belongs to $S(l;f^1,\ldots,f^k)$ for every $l>l_0$. Moreover, if $g$ is a constant function then $g\in S(0;f^1,\ldots,f^k)$.

\begin{proposition}[{see \cite[Proposition 2.3]{Q19} and \cite[Proposition 3.2]{Q20}}]\label{3.2}
If $g_i\in S(l_i;f^1,\ldots,f^l)$ $(1\le i\le s)$ then $\prod_{i=1}^sg_i\in S(\sum_{i=1}^sl_i;f^1,\ldots,f^l)$.
\end{proposition}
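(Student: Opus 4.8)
The plan is to take $\alpha:=\sum_{i=1}^s\alpha_i$, where $\alpha_i$ is the admissible multi-index witnessing $g_i\in S(l_i;f^1,\ldots,f^l)$, and then to estimate the sphere integral of $|z^\alpha\prod_i g_i|^t$ via H\"older's inequality, applying the small-integration bound to each factor separately with a carefully balanced choice of exponents. First I would record that $|\alpha|=\sum_i|\alpha_i|\le\sum_i l_i=:L$ (multi-indices have non-negative entries, so $|\cdot|$ is additive), and that $\prod_i g_i$ again lies in $\mathcal C(\B^m(R_0))$, since a finite product of functions continuous off codimension-two analytic sets is continuous off the union of those sets, which is again of codimension two, and attains $+\infty$ only on a thin set. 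Writing $z^\alpha\prod_i g_i=\prod_i z^{\alpha_i}g_i$, the task reduces to bounding $\int_{S(r)}\prod_i|z^{\alpha_i}g_i|^t\sigma_m$, where I abbreviate the common right-hand side by $\Psi(r):=\frac{R^{2m-1}}{R-r}\sum_u T_{f^u}(r,r_0)$.

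Next, fix $t,p$ with $0\le tL<p<1$; I must produce the bound with right-hand exponent exactly $p$. Since $\sigma_m$ is a positive measure on $S(r)$, the generalized H\"older inequality with exponents $q_1,\ldots,q_s>1$ satisfying $\sum_{i=1}^s 1/q_i=1$ gives
$$\int_{S(r)}\prod_{i=1}^s|z^{\alpha_i}g_i|^t\sigma_m\ \le\ \prod_{i=1}^s\Big(\int_{S(r)}|z^{\alpha_i}g_i|^{tq_i}\sigma_m\Big)^{1/q_i}.$$
To the $i$-th factor I apply $g_i\in S(l_i;\ldots)$ with test exponent $tq_i$ and right-hand exponent $p_i$, which is legitimate as soon as $0\le tq_il_i<p_i<1$; this bounds that factor by $K_i^{1/q_i}\,\Psi(r)^{p_i/q_i}$. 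Multiplying over $i$ yields the total exponent $\sum_i p_i/q_i$ and the constant $\prod_i K_i^{1/q_i}$.

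The crux is the feasibility of choosing $q_i,p_i$ so that $\sum_i p_i/q_i=p$ while respecting $\sum_i 1/q_i=1$ and $tq_il_i<p_i<1$. Writing $\beta_i:=p_i/q_i$, the constraints read $\beta_i>tl_i$, $\beta_i<1/q_i$, $\sum_i\beta_i=p$, and $\sum_i 1/q_i=1$. Because $\sum_i tl_i=tL<p<1$, both budgets carry slack, and I set $\beta_i:=tl_i+\frac{p-tL}{s}$ and $\frac{1}{q_i}:=\beta_i+\frac{1-p}{s}=tl_i+\frac{1-tL}{s}$. One checks directly that $\beta_i>tl_i$ (as $p>tL$), that $\beta_i<1/q_i$ (as $p<1$), that $0<1/q_i\le 1$ with $\sum_i 1/q_i=tL+(1-tL)=1$ (so $q_i>1$ for $s\ge2$, the case $s=1$ being trivial), and that $\sum_i\beta_i=tL+(p-tL)=p$. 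Hence the total exponent is exactly $p$, and the constant $\prod_i K_i^{1/q_i}$, being a weighted geometric mean of the fixed $K_i$ with weights summing to $1$, is bounded by $K:=\max(1,K_1,\ldots,K_s)$, uniformly in $t,p,r$. This is precisely the small-integration bound for $\prod_i g_i$ at level $L=\sum_i l_i$.

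I expect the only genuine obstacle to be the bookkeeping in this last step: verifying that the strict inequalities $tL<p<1$ leave exactly enough room to meet the H\"older constraint $\sum_i 1/q_i=1$ together with the per-factor admissibility $tq_il_i<p_i<1$ and the target $\sum_i p_i/q_i=p$ simultaneously. Everything else is a direct invocation of H\"older's inequality and Definition \ref{3.1}.
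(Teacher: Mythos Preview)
The paper does not actually prove this proposition; it is stated with a citation to \cite[Proposition~2.3]{Q19} and \cite[Proposition~3.2]{Q20} and no argument is given here. So there is nothing in the present paper to compare against.

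Your proof is correct and is the natural one. The choice $\alpha=\sum_i\alpha_i$ together with the generalized H\"older inequality reduces the problem to per-factor estimates, and your explicit parameters $\beta_i=tl_i+(p-tL)/s$, $1/q_i=\beta_i+(1-p)/s$ solve the bookkeeping cleanly: one checks directly that $1/q_i=tl_i+(1-tL)/s\le tL+(1-tL)=1$ (with strict inequality when $s\ge2$, so H\"older genuinely applies), that $p_i=q_i\beta_i<1$, and that $tq_il_i<p_i$, exactly as Definition~\ref{3.1} requires. The uniformity of the constant in $t,p$ follows from the weighted geometric mean bound $\prod_iK_i^{1/q_i}\le\max_iK_i$, since the weights $1/q_i$ sum to $1$ and each $K_i$ is fixed independently of $t,p$ by the quantifier structure of Definition~\ref{3.1}. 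The verification that $\prod_ig_i\in\mathcal C(\B^m(R_0))$ is routine, as you indicate.
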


\begin{definition}[{see \cite[Definition 3.3]{Q20}}]\label{3.3} 
A meromorphic function $h$ on $\B^m(R_0)$ is said to be of bounded integration with bi-degree $(p,l_0)$ for the family $\{f^1,\ldots,f^k\}$ if there exists $g\in S(l_0;f^1,\ldots,f^k)$ satisfying
$$|h|\le \|f^1\|^p\cdots \|f^u\|^p\cdot g,$$
outside a proper analytic subset of $\B^m(R_0)$.
\end{definition}
Denote by $B(p,l_0;f^1,\ldots,f^k)$ the set of all meromorphic functions on $\B^m(R_0)$ which are of bounded integration of bi-degree $(p,l_0)$ for $\{f^1,\ldots,f^k\}$. We have the following:
\begin{itemize}
\item For a meromorphic function $h$, $|h|\in S(l_0;f^1,\ldots,f^k)$ iff $h\in B(0,l_0;f^1,\ldots,f^k)$.
\item $B(p,l_0;f^1,\ldots,f^k)\subset B(p,l;f^1,\ldots,f^k)$ for every $0\le l_0<l$.
\item If $h_i\in B(p_i,l_i;f^1,\ldots,f^k)\ (1\le i\le s)$ then 
$$h_1\cdots h_m\in B(\sum_{i=1}^sp_i,\sum_{i=1}^sl_i;f^1,\ldots,f^k).$$
\end{itemize}

The following proposition is proved by Fujimoto \cite{Fu83} and reproved by Ru-Sogome \cite{RS}.
\begin{proposition}[{see \cite[Proposition 6.1]{Fu83}, also \cite[Proposition 3.3]{RS}}]\label{3.4}
 Let $L_1,\ldots ,L_l$ be linear forms of $l$ variables and assume that they are linearly independent. Let $F$ be a meromorphic mapping from the ball $\B^m(R_0)\subset\C^m$ into $\P^{l-1}(\C)$ with a reduced representation $F=(F_0,\ldots ,F_{l-1})$ and let $(\alpha_1,\ldots ,\alpha_l)$ be an admissible set of $F$. Set $l_0=|\alpha_1|+\cdots +|\alpha_l|$ and take $t,p$ with $0< tl_0< p<1$. Then, for $0 < r_0 < R_0,$ there exists a positive constant $K$ such that for $r_0 < r < R < R_0$,
$$\int_{S(r)}\biggl |z^{\alpha_1+\cdots +\alpha_l}\dfrac{W_{\alpha_1,\ldots ,\alpha_l}(F_0,\ldots ,F_{l-1})}{L_0(F)\ldots L_{l-1}(F)}\biggl |^t\sigma_m\le K\biggl (\dfrac{R^{2m-1}}{R-r}T_F(R,r_0)\biggl )^p.$$
\end{proposition}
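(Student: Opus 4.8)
The plan is to strip off the linear forms, rewrite the integrand as a determinant of logarithmic derivatives, and then reduce by Hölder's inequality to single logarithmic-derivative integrals governed by the lemma on logarithmic derivatives on a ball; the growth factor $R^{2m-1}/(R-r)$ will emerge from a Borel-type growth lemma. First I would eliminate the forms. Since the general Wronskian $W_{\alpha_1,\ldots,\alpha_l}(\cdot)$ is multilinear and alternating in its entries $F_0,\ldots,F_{l-1}$, the nonsingular substitution $G_j=L_j(F)$ multiplies $W_{\alpha_1,\ldots,\alpha_l}(F_0,\ldots,F_{l-1})$ by the nonzero constant determinant $\det A$ of the coefficient matrix of $L_0,\ldots,L_{l-1}$; this determinant is nonzero precisely because the forms are linearly independent. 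Hence
$$\frac{W_{\alpha_1,\ldots,\alpha_l}(F_0,\ldots,F_{l-1})}{L_0(F)\cdots L_{l-1}(F)}=\frac{1}{\det A}\cdot\frac{W_{\alpha_1,\ldots,\alpha_l}(G_0,\ldots,G_{l-1})}{G_0\cdots G_{l-1}},$$
and factoring $G_j$ out of the $j$-th column of the determinant yields
$$\frac{W_{\alpha_1,\ldots,\alpha_l}(G_0,\ldots,G_{l-1})}{G_0\cdots G_{l-1}}=\det\Bigl(\frac{\mathcal D^{\alpha_i}G_j}{G_j}\Bigr)_{1\le i\le l,\ 0\le j\le l-1}.$$
Thus the integrand is $z^{\alpha_1+\cdots+\alpha_l}$ times a determinant whose entries are logarithmic derivatives.

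Next I would expand this determinant over permutations $\sigma$ of $\{0,\ldots,l-1\}$. Since $0<t<1$, we have $\bigl(\sum_\sigma|\cdot|\bigr)^t\le\sum_\sigma|\cdot|^t$, so
$$\Bigl|z^{\alpha_1+\cdots+\alpha_l}\det\bigl(\tfrac{\mathcal D^{\alpha_i}G_j}{G_j}\bigr)\Bigr|^t\le\sum_{\sigma}\ \prod_{i=1}^l\Bigl|z^{\alpha_i}\frac{\mathcal D^{\alpha_i}G_{\sigma(i)}}{G_{\sigma(i)}}\Bigr|^t.$$
The factors with $|\alpha_i|=0$ equal $1$ and are discarded; to the remaining ones I would apply Hölder's inequality on $S(r)$ with weights $1/q_i=|\alpha_i|/l_0$, which satisfy $\sum_i 1/q_i=1$ and $q_i\ge 1$. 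This leaves, for each surviving factor, a single integral $\int_{S(r)}\bigl|z^{\alpha_i}\,\mathcal D^{\alpha_i}G_{\sigma(i)}/G_{\sigma(i)}\bigr|^{tq_i}\sigma_m$ whose exponent obeys $tq_i|\alpha_i|=tl_0<p<1$, exactly the regime in which a single logarithmic-derivative bound is available.

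The heart of the argument is therefore the single-factor estimate
$$\int_{S(r)}\Bigl|z^{\alpha}\frac{\mathcal D^{\alpha}G}{G}\Bigr|^{\beta}\sigma_m\le K\Bigl(\frac{R^{2m-1}}{R-r}\,T_F(R,r_0)\Bigr)^{\beta|\alpha|}\qquad(0<\beta|\alpha|<1),$$
the several-variable lemma on logarithmic derivatives on $\B^m(R_0)$, where $G$ is a component of $F$ so that its characteristic is dominated by $T_F$ up to $O(1)$. I would prove it by induction on $|\alpha|$: a higher-order logarithmic derivative $\mathcal D^{\alpha}G/G$ is a universal polynomial in iterated first-order logarithmic derivatives (a Leibniz/Faà di Bruno expansion), so a further Hölder splitting reduces matters to $|\alpha|=1$, which follows from Jensen's formula applied to $\log^+|z_k\,\partial G/\partial z_k\,/\,G|$ together with a Borel (growth) lemma bounding the derivative of the increasing function $T_F$; this last step is precisely what manufactures the factor $R^{2m-1}/(R-r)$ and converts the additive $O(\log)$ proximity estimate into a multiplicative power bound. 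Multiplying the factor estimates back through the Hölder splittings produces $\bigl(\tfrac{R^{2m-1}}{R-r}T_F(R,r_0)\bigr)^{tl_0}$, and since $tl_0<p$ while the bracketed quantity is $\ge 1$ in the range relevant to the applications (it tends to $+\infty$ as $r\to R_0$), the exponent $tl_0$ may be raised to $p$ at the cost of enlarging $K$, giving the claimed inequality.

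I expect the main obstacle to be the single-factor estimate itself: the several-variable logarithmic derivative lemma on balls with the \emph{explicit} dependence $R^{2m-1}/(R-r)$, together with the careful bookkeeping needed to descend from the operators $\mathcal D^{\alpha_i}$ to first order while keeping every Hölder exponent strictly below $1$ and their total contribution controlled by $p$. This calculus-lemma core is what Fujimoto \cite{Fu83} and Ru--Sogome \cite{RS} carry out in detail, and I would follow their argument to secure the stated growth factor.
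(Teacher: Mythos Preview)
The paper does not prove this proposition; it merely quotes it from Fujimoto \cite{Fu83} and Ru--Sogome \cite{RS} and immediately passes to the consequence that $\bigl|W_{\alpha_1,\ldots,\alpha_l}(F_0,\ldots,F_{l-1})/\prod_j L_j(F)\bigr|\in S(l_0;F)$. There is therefore no ``paper's own proof'' to compare against. Your outline is exactly the argument carried out in those two references: remove the linear forms by the nonsingular change $G_j=L_j(F)$, rewrite the quotient as $\det\bigl(\mathcal D^{\alpha_i}G_j/G_j\bigr)$, expand over permutations and use $(\sum)^t\le\sum(\cdot)^t$ for $0<t<1$, split each product by H\"older with weights $|\alpha_i|/l_0$, and then appeal to the several-variable logarithmic-derivative lemma on $\B^m(R_0)$ for the individual factors. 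So your plan coincides with the source the paper defers to.

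One small point to tighten: in the cited proofs the exponent $p$ enters directly through the single-factor logarithmic-derivative estimate (one fixes any $p$ with $t|\alpha_i|q_i=tl_0<p<1$ and the lemma already delivers the bound with exponent $p$), rather than first obtaining exponent $tl_0$ and then ``raising'' it to $p$. Your raising step would require $\frac{R^{2m-1}}{R-r}T_F(R,r_0)\ge 1$, which is not part of the hypothesis; route the exponent $p$ through the single-factor lemma instead and the issue disappears.
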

This proposition implies that the function $\left |\dfrac{W_{\alpha_1,\ldots ,\alpha_l}(F_0,\ldots ,F_{l-1})}{L_0(F)\ldots L_{l-1}(F)}\right |$ belongs to $S(l_0;F)$.

\begin{lemma}[{see also \cite[Lemma 3.3 and Lemma 3.4]{No05}}]\label{5.1}
Let $H_1,...,H_q$ be $q$ hyperplanes in $\mathbb P^n(\mathbb C)$ in $N$-subgeneral position, where $q>2N-n+1.$ Then, there are positive rational constants $\omega_i\ (1\le i\le q)$ satisfying the following:

i) $0<\omega_i \le 1,\  \forall i\in\{1,...,q\}$,

ii) Setting $\tilde \omega =\max_{j\in Q}\omega_j$, one gets
$$\sum_{j=1}^{q}\omega_j=\tilde \omega (q-2N+n-1)+n+1.$$

iii) $\dfrac{n+1}{2N-n+1}\le \tilde\omega\le\dfrac{n}{N}.$

iv) Let $E_i\ge 1\ (1\le i \le q)$ be arbitrarily given numbers. For $R\subset \{1,...,q\}$ with $\sharp R = N+1$,  there is a subset $R^o\subset R$ such that $\sharp R^o=\rank \{H_i\}_{i\in R^o}=n+1$ and 
$$\prod_{i\in R}E_i^{\omega_i}\le\prod_{i\in R^o}E_i.$$
\end{lemma}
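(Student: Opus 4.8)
The statement is the Nochka weight lemma, and the plan is to isolate its combinatorial heart and then reduce the product inequality (iv) to an inequality about a rank function. Write $Q=\{1,\dots,q\}$ and let $r(A):=\rank\{H_i\}_{i\in A}$ be the rank of the family of linear forms indexed by $A\subseteq Q$; this is a matroid rank function, so $r(\emptyset)=0$, $r$ is monotone and submodular, $r(A)\le\sharp A$, and $r(Q)=n+1$ since $q>2N-n+1\ge N+1$ and any $N+1$ of the $H_i$ span. The only place the hypothesis of $N$-subgeneral position enters is the inequality
$$\sharp A\le r(A)+N-n\qquad\text{whenever } r(A)\le n.$$
I would prove this by contradiction: if $\sharp A\ge r(A)+N-n+1$ for some $A$ with $r(A)\le n$, then (passing to a size-$(N+1)$ subset when $\sharp A>N$, and otherwise adjoining $N+1-\sharp A$ further indices from $Q\setminus A$, which is possible since $q\ge N+1$) one obtains a set of cardinality $N+1$ whose rank is at most $r(A)+(N+1-\sharp A)\le n$, contradicting $N$-subgeneral position.

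\textbf{Reduction of (iv).}
I claim (iv) is a formal consequence of the independence-polytope inequalities $\sum_{i\in A}\omega_i\le r(A)$ for all $A\subseteq R$, in which case (iv) will hold for \emph{every} choice of $E_i\ge1$. Fix $R$ with $\sharp R=N+1$ (so $r(R)=n+1$) and relabel so that $E_1\ge\cdots\ge E_{N+1}$, putting $e_j=\log E_j\ge0$ (decreasing). Running the matroid greedy algorithm on $R$ in this order, adjoining an index to $R^o$ exactly when it raises the rank, produces a linearly independent $R^o\subseteq R$ with $\sharp R^o=r(R)=n+1=\rank\{H_i\}_{i\in R^o}$ and with $\sharp(R^o\cap A_k)=r(A_k)$ for every prefix $A_k=\{1,\dots,k\}$. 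Abel summation with $e_{N+2}:=0$ gives the two identities $\sum_{j}\omega_je_j=\sum_{k}(e_k-e_{k+1})\sum_{j\le k}\omega_j$ and $\sum_{i\in R^o}e_i=\sum_k(e_k-e_{k+1})\,\sharp(R^o\cap A_k)$. Since $e_k-e_{k+1}\ge0$ and $\sum_{j\le k}\omega_j=\sum_{i\in A_k}\omega_i\le r(A_k)=\sharp(R^o\cap A_k)$, comparing them yields $\sum_{i\in R}\omega_i\log E_i\le\sum_{i\in R^o}\log E_i$, which is exactly (iv) after exponentiating.

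\textbf{Construction of the weights.}
It remains to produce rationals $\omega_i$ with $0<\omega_i\le1$, satisfying the polytope inequalities above, the exact total (ii), and the bounds (iii); this is carried out by Nochka's recursive extremal construction. I would set the Nochka constant $\theta:=1/\tilde\omega$ and determine it, together with a chain $\emptyset\subsetneq B_1\subsetneq\cdots\subsetneq B_s\subseteq Q$ of tight rank-deficient subsets, by repeatedly selecting a subset extremal for the density ratio controlled by the first paragraph, then assigning to the elements peeled off at each stage a common weight read off from the rank increments, with every index lying in no deficient set receiving weight $1$. The inequality $\sharp A\le r(A)+N-n$ bounds every density ratio and so forces $\tilde\omega\le n/N$, while the count $q>2N-n+1$ forces $\tilde\omega\ge(n+1)/(2N-n+1)$; together these give (iii). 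Tracking the weight of each layer against its rank increment yields the polytope inequalities and, upon summing over all layers, the exact identity (ii).

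\textbf{Main obstacle.}
The genuinely delicate point is this last construction: one must exhibit a \emph{single} weight vector that simultaneously respects the caps $\omega_i\le1$, saturates the rank inequalities enough to force the exact equality (ii), and stays inside the independence polytope so that the greedy comparison of the second paragraph applies. Arranging the extremal sets to be nested and making the resulting constant land in the interval (iii) is precisely where $N$-subgeneral position and the threshold $q>2N-n+1$ are used in an essential, quantitative way; by contrast, the reductions in the first two paragraphs are comparatively formal.
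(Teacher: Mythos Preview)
The paper does not supply its own proof of this lemma: it is stated with the attribution ``see also \cite[Lemma 3.3 and Lemma 3.4]{No05}'' and used as a black box, so there is no argument in the paper to compare against.

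Your proposal is a reasonable outline of the standard Nochka proof, and the first two paragraphs are correct and complete as written. The matroid inequality $\sharp A\le r(A)+N-n$ for $r(A)\le n$ is exactly the content of $N$-subgeneral position, and your derivation of (iv) from the independence-polytope constraints $\sum_{i\in A}\omega_i\le r(A)$ via greedy selection and Abel summation is the clean modern formulation of Nochka's comparison argument.

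The gap, which you yourself flag, is that the third paragraph is only a plan, not a proof. You describe the Nochka construction in broad strokes (extremal density ratios, a nested chain of deficient sets, layer-by-layer assignment of weights) but do not actually carry out the recursion, verify that the extremal sets can be taken nested, check that the resulting $\omega_i$ are positive rationals bounded by $1$, or derive the exact identity (ii) and the two-sided bound (iii). These verifications are where the work lies; in particular, showing that the recursively defined $\tilde\omega$ lands in the interval $[\frac{n+1}{2N-n+1},\frac{n}{N}]$ requires tracking the density ratios through the chain and using both the subgeneral-position bound and the hypothesis $q>2N-n+1$ in a precise way. As written, your proposal correctly reduces the lemma to the existence of Nochka weights and correctly identifies what must be shown, but stops short of establishing that existence.
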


\section{Proof of Theorem \ref{thm1.1}}

In this section we will prove Theorem \ref{thm1.1}. We need the following lemmas.
\begin{lemma}\label{lem3.1}
Let $f$ be a differentiably non-degenerate meromorphic mapping of a ball $\B^m(R_0)$ in $\C^m$ into $\P^n(\C)\ (m\ge n)$ with a reduced representation $(f_0:\cdots:f_n)$. Let $H_0,\ldots,H_n$ be $n+1$ hyperplanes of $\P^n(\C)$ in general possition. Let $\alpha =(\alpha_0,\ldots,\alpha_n)\in (\N^m)^{n+1}$ with $|\alpha_0|=0,|\alpha_i|=1\ (1\le i\le n)$ such that $W:=\det (\mathcal D^{\alpha_i}f_j; 0\le i,j\le n)\not\equiv 0$. Then we have
$$ \sum_{i=0}^n\nu_{(f,H_i)}-\nu_W\le\nu^{[1]}_{\prod_{i=0}^n(f,H_i)}.$$
\end{lemma}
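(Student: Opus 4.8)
The plan is to work locally: fix a point $z_0 \in \B^m(R_0)$ and prove the divisor inequality at $z_0$. Since all the objects involved are divisors, it suffices to compare the coefficients $\nu_{(f,H_i)}(z_0)$, $\nu_W(z_0)$, and $\nu^{[1]}_{\prod (f,H_i)}(z_0)$. We may assume $z_0$ is a generic point of some component of the support, so that after a linear change of coordinates on $\C^m$ we can treat the relevant functions as functions of one variable $z_1$ near $z_0$ (this is the standard reduction: choose the coordinate direction transverse to the analytic set where the problem occurs, or more carefully invoke the definition of $\nu$ via restriction to a generic line). Let $a_i = \nu_{(f,H_i)}(z_0) \ge 0$.

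The key observation is the behaviour of the Wronskian under the differential operators $\mathcal{D}^{\alpha_i}$ with $|\alpha_i| \le 1$. First I would note that $W = \det(\mathcal{D}^{\alpha_i} f_j)$ is, up to a nonzero constant factor coming from the change of basis, equal to $\det(\mathcal{D}^{\alpha_i}(f, H_j))$ for $j = 0, \dots, n$, because $(f,H_j) = \sum_k (H_j)_k f_k$ is a linear combination of the $f_k$ with constant coefficients and $\{H_0, \dots, H_n\}$ in general position means the coefficient matrix is invertible; the operators $\mathcal{D}^{\alpha_i}$ are linear. So $W = c \cdot \det(\mathcal{D}^{\alpha_i}(f,H_j))_{0 \le i,j \le n}$ with $c \ne 0$. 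Now, near $z_0$, each $(f, H_j)$ vanishes to order $a_j$, so $\mathcal{D}^{\alpha_i}(f, H_j)$ — with $|\alpha_i| \le 1$ — vanishes to order at least $a_j - 1$ (or at least $a_j$ if $|\alpha_i| = 0$). Therefore each entry in column $j$ of the matrix is divisible (as a germ) by a function vanishing to order $a_j - 1$; pulling this out of each column gives $\nu_W(z_0) \ge \sum_{j=0}^n (a_j - 1)$. Hmm — but that gives the wrong inequality direction for what we want. Let me reconsider: we want $\sum_i a_i - \nu_W(z_0) \le \nu^{[1]}_{\prod(f,H_i)}(z_0)$, i.e. $\sum_i a_i - \nu^{[1]}_{\prod(f,H_i)}(z_0) \le \nu_W(z_0)$. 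The right side $\nu^{[1]}_{\prod(f,H_i)}(z_0)$ equals the number of indices $i$ with $a_i \ge 1$ (since the $H_i$, being in general position, at most $n$ of them... actually at $z_0$ at most $n$ can vanish if they're in general position and $f$ is nondegenerate — but let's not assume that). Call that number $s$. So I must show $\sum_i a_i - s \le \nu_W(z_0)$, i.e. $\sum_{i : a_i \ge 1}(a_i - 1) \le \nu_W(z_0)$.

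So the crux is exactly the column-extraction argument above: write each entry of column $j$ as $(z_1 - z_{0,1})^{\max(a_j-1,0)} \cdot (\text{holomorphic})$ — wait, more care is needed because when $|\alpha_i|=0$ the entry $(f,H_j)$ vanishes to order $a_j$, not $a_j - 1$, and when $\alpha_i$ is a unit vector in a direction along which $(f,H_j)$ doesn't drop order the vanishing might still be $a_j$. The safe bound is: every entry of column $j$ vanishes to order $\ge \max(a_j - 1, 0)$. Factoring $(z_1-z_{0,1})^{\max(a_j-1,0)}$ out of column $j$ for each $j$ shows $\nu_W(z_0) \ge \sum_{j=0}^n \max(a_j-1,0) = \sum_{j:a_j\ge 1}(a_j-1) = \sum_j a_j - s = \sum_j a_j - \nu^{[1]}_{\prod(f,H_i)}(z_0)$, which rearranges to the claim. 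I would also remark that $\nu_W$ is supported where $W$ vanishes, which includes the zeros of the $(f,H_i)$, so the divisor $\sum_i \nu_{(f,H_i)} - \nu_W$ makes sense as stated (it could be a negative divisor, which is fine — the lemma just bounds it above).

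The main obstacle, and the step requiring the most care, is the reduction to a one-variable order count and justifying the uniform "order $\ge \max(a_j-1,0)$" bound for $\mathcal{D}^{\alpha_i}(f,H_j)$ in the multivariable setting, since $\mathcal{D}^{\alpha_i}$ for $|\alpha_i|=1$ is a first-order partial derivative $\partial/\partial z_k$ and one must verify that differentiating a germ vanishing to order $a_j$ on a divisor through $z_0$ yields a germ vanishing to order $\ge a_j - 1$ there; this is clear along a generic transversal but needs the observation that $\nu_{(f,H_j)}$ is (at a generic point of its support) the vanishing order along any generic line, plus the fact that $W \not\equiv 0$ lets us pick coordinates and a generic point of each component of $\supp\big(\sum\nu_{(f,H_i)}\big)$ where everything is a smooth point. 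Once that local picture is set up, the determinant column-factoring is routine linear algebra of germs.
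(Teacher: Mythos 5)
There is a genuine gap, and it originates in your reading of the right-hand side. With the paper's convention $\nu^{[d]}=\min\{\nu,d\}$ applied to the divisor of the product, one has $\nu^{[1]}_{\prod_{i=0}^n(f,H_i)}(z_0)=\min\bigl\{1,\sum_i a_i\bigr\}\le 1$; it is \emph{not} the number $s$ of indices with $a_i\ge 1$ (that quantity would be $\sum_i\nu^{[1]}_{(f,H_i)}(z_0)$, a different and generally larger divisor). So at a regular point $z_0$ of the support where, say, $l+1\ge 2$ of the functions $(f,H_i)$ vanish along the same local hypersurface $\{x_1=0\}$ --- a situation the lemma does not exclude, since no hypothesis of the form $\dim f^{-1}(H_i)\cap f^{-1}(H_j)\le m-2$ is assumed here --- the lemma demands $\nu_W(z_0)\ge\sum_i a_i-1$, whereas your column-by-column extraction of $x_1^{\max(a_j-1,0)}$ only yields $\nu_W(z_0)\ge\sum_i a_i-s$. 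This weaker bound would propagate into Claim \ref{cl3.6} and ruin the constants of Theorem \ref{thm1.1}(a), so the distinction is not cosmetic.

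The missing idea is that the loss of one order of vanishing per column cannot happen simultaneously in all columns. After reducing (as you and the paper both do) to $(f,H_j)=f_j$ and to a regular point of $S=\{f_0\cdots f_n=0\}$ with local coordinates in which $S=\{x_1=0\}$, write $W=f_n^{n+1}\det\bigl(\partial(f_j/f_n)/\partial z_i\bigr)$ and expand each row $\partial/\partial z_i=\sum_s(\partial x_s/\partial z_i)\,\partial/\partial x_s$ by multilinearity: $W$ becomes a combination of determinants whose $n$ rows are $\partial/\partial x_{i_1},\dots,\partial/\partial x_{i_n}$ with \emph{distinct} indices. Only $\partial/\partial x_1$ lowers the vanishing order of $f_j/f_n=x_1^{t_j}\cdot(\text{holomorphic})$ from $t_j$ to $t_j-1$; the tangential derivatives preserve it. In each permutation term of each such determinant every column is differentiated exactly once and at most one column is differentiated by $\partial/\partial x_1$, so each term vanishes to order at least $\bigl(\sum_j t_j\bigr)-1$. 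This is how the paper obtains $\nu_W(z_0)\ge\sum_i\nu_{(f,H_i)}(z_0)-1$, i.e. the truncation of the \emph{total} divisor at level $1$. Your setup (reduction to $(f,H_j)=f_j$ via the invertible coefficient matrix, passage to a generic regular point, order count for first derivatives) otherwise matches the paper's; only this determinant bookkeeping, which is the heart of the lemma, is missing.
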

\begin{proof}
Since $W=C\det (\mathcal D^{\alpha_i}(f,H_j))$ with a nonzero constant $C$, without loss of generality we may suppose that $H_i=\{\omega_i=0\}\ (0\le i\le n).$ Then we have $(f,H_i)=f_i$. Also, we may assume that 
$$\alpha_1=(1,0,0,\ldots,0),\alpha_2=(0,1,0,\ldots,0),\ldots,\alpha_n=(0,0,\ldots,0,\overset{n-th}{1},0\ldots,0).$$
Let $b$ be a regular point of the analytic set $S=\{f_0\cdots f_n=0\}$ and $b$ is not in the indeterminacy locus $I(f)$ of $f$. Then there is a local affine coordinates $(U,x)$ around $b$, where $U$ is a neighborhood of $b$ in $\B^m(R_0)$, $x=(x_1,\ldots,x_m),x(b)=(0,\ldots,0)$ such that $S\cap U=\{x_1=0\}\cap U$. 

Since $b\not\in I(f)$, we may suppose that $S\cap U=\{f_i=0\}\cap U\ (0\le i\le l)$ and $f_j\ (l+1\le j\le n)$ does not vanishes on $U$. Therefore, we have $f_i=x_1^{t_i}g_j\ (0\le i\le l)$ with some holomorphic function $g_j$. We easily see that
$$ \mathcal D^{\alpha_i}(f_j/f_n)=\frac{\partial (f_j/f_n)}{\partial z_i}=\sum_{s=1}^m\frac{\partial x_s}{\partial z_i}\cdot \frac{\partial}{\partial x_s}\left(\frac{f_j}{f_n}\right)\ (0\le j\le n-1) $$  
and 
$$ \nu_{\frac{\partial}{\partial x_s}\left(\frac{f_j}{f_n}\right)}(b)\ge \begin{cases}
t_j-1&\text{if }s=1\\
t_j&\text{if }s>1,
\end{cases}\ \forall 1\le j\le l.$$
On the other hand, we have
\begin{align*}
W=&\det (D^{\alpha_i}f_j;0\le i,j\le n)=
\left |\begin{array}{cccc}
f_0&f_1&\ldots&f_n\\ 
\frac{\partial f_0}{\partial z_1}&\frac{\partial f_1}{\partial z_1}&\ldots&\frac{\partial f_n}{\partial z_1}\\
\vdots&\vdots&\ldots&\vdots\\ 
\frac{\partial f_0}{\partial z_n}&\frac{\partial f_1}{\partial z_n}&\ldots&\frac{\partial f_n}{\partial z_n}
\end{array}\right |\\
=&f_n^{n+1}\left |\begin{array}{cccc}
\frac{\partial (f_0/f_n)}{\partial z_1}&\frac{\partial (f_1/f_n)}{\partial z_1}&\ldots&\frac{\partial (f_{n-1}/f_n)}{\partial z_1}\\
\vdots&\vdots&\ldots&\vdots\\ 
\frac{\partial (f_0/f_n)}{\partial z_n}&\frac{\partial (f_1/f_n)}{\partial z_n}&\ldots&\frac{\partial(f_{n-1}/f_n)}{\partial z_n}
\end{array}\right|.
\end{align*}
This implies that
\begin{align*}
\nu_W(b)&\ge\min\{\nu_{\det\left(\frac{\partial}{\partial x_{i_s}}\bigl(\frac{f_j}{f_n}\bigl);0\le j,s\le n-1\right)}(b); 1\le i_0<\cdots <i_{n-1}\le m\}\\
&\ge\min\sum_{j=0}^{n-1}\nu_{\frac{\partial}{\partial x_{i_j}}\bigl(\frac{f_j}{f_n}\bigl)}(b)\\
&\ge t_1+\cdots+t_l-1=\sum_{i=0}^{n}\nu_{(f,H_i)}(b)-\nu^{[1]}_{\prod_{i=0}^n(f,H_i)}(b). 
\end{align*}
Therefore, we have 
$$ \sum_{i=0}^n\nu_{(f,H_i)}(b)-\nu^{[1]}_{\prod_{i=0}^n(f,H_i)}(b)\ge \nu_W(b).$$
The lemma is proved.
\end{proof}

\begin{lemma}\label{lem3.2} 
	Let $f^1,f^2,\ldots,f^k$ be $k$ differentiably nondegenerate meromorphic mappings from the complete K\"{a}hler manifold whose universal covering is biholomorphic to $\B^m(R_0)$ into $\P^n(\C)$, which satisfy the condition $(C_\rho)$. Let $H_1,\ldots,H_q$ be $q$ hyperplanes of $\P^n(\C)$ in $N-$subgeneral position, where $q$ is a positive integer. Assume that there exists a non zero holomorphic function $h\in B(p,l_0;f^1,\ldots,f^k)$ such that
	$$\nu_h\ge\lambda\sum_{u=1}^k\nu^{[1]}_{(f^u,D)},$$
	where $D$ is the hypersurface $H_1+\cdots+H_q$, $p,l_0$ are non-negative integers, $\lambda$ is a positive number. Then we have
\begin{align}\label{eq3.3}
q\le 2N-n+1+\frac{p(2N-n+1)}{\lambda(n+1)}+\rho\left (kn+\dfrac{l_0}{\lambda}\right ).
\end{align}
Moreover, if we assume further that $\nu_h\ge\lambda\sum_{u=1}^k\sum_{i=1}^q\nu^{[1]}_{(f^u,H_i)}$ then we have
\begin{align}\label{eq3.4}
q\le 2N-n+1+\frac{p(2N-n+1)}{\lambda (n+1)}+\rho\left (kn+\dfrac{l_0n}{\lambda N}\right ).
\end{align}
\end{lemma}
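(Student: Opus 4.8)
The strategy is to compare, via the ``First Main Theorem''--type integration (Jensen's formula) on the ball $\B^m(R_0)$, the counting function of the zero divisor of $h$ with the characteristic functions of the $f^u$, using the hypothesis $h\in B(p,l_0;f^1,\ldots,f^k)$ together with a divisor inequality that converts $\sum_u\nu^{[1]}_{(f^u,D)}$ into a sum over individual hyperplanes weighted by the Nochka weights of Lemma \ref{5.1}. The key divisor-level input is Lemma \ref{lem3.1}: for each $f^u$ and each subset $R^\circ$ of $n+1$ hyperplanes in general position extracted from a group of $N+1$ hyperplanes, writing $W^u$ for the associated general Wronskian, one has $\sum_{i\in R^\circ}\nu_{(f^u,H_i)}-\nu_{W^u}\le\nu^{[1]}_{\prod_{i\in R^\circ}(f^u,H_i)}\le\nu^{[1]}_{(f^u,D)}$. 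Summing over a partition of $\{1,\ldots,q\}$ into such groups and applying the Nochka weight inequality (iv) of Lemma \ref{5.1} yields a lower bound of the shape $\tilde\omega\sum_{i=1}^q\nu_{(f^u,H_i)}-\sum(\text{Wronskians})\le (\text{const})\cdot\nu^{[1]}_{(f^u,D)}$, hence, after summing over $u$ and invoking the hypothesis $\nu_h\ge\lambda\sum_u\nu^{[1]}_{(f^u,D)}$, a bound relating $\nu_h$, the $\nu_{(f^u,H_i)}$ and the Wronskian divisors.

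Next I would integrate this divisor inequality. On the left, $N^{[1]}_{h}(r,r_0)$ is controlled using $h\in B(p,l_0;f^1,\ldots,f^k)$: by definition $|h|\le\|f^1\|^p\cdots\|f^k\|^p\,g$ with $g\in S(l_0;f^1,\ldots,f^k)$, so Jensen's formula plus the defining estimate for functions of small integration gives $N_h(r,r_0)\le p\sum_{u}T_{f^u}(r,r_0)+ (\text{small error from }g)$; the level $l_0$ of $g$ is precisely what produces the $\rho\, l_0/\lambda$ (resp. $\rho\, l_0 n/(\lambda N)$) term once the condition $(C_\rho)$ is used to absorb the small-integration error into $\rho\sum_u T_{f^u}$. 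On the right, the Wronskian terms $N_{W^u}(r,r_0)$ are handled by Proposition \ref{3.4}: the function $W_{\alpha}(f^u_0,\ldots,f^u_n)/\prod(f^u,H_i)$ lies in $S(n;f^u)$, and combining this with $(C_\rho)$ bounds the contribution of each $\nu_{W^u}$ by $n\rho\, T_{f^u}(r,r_0)$ plus a term comparable to $\sum_i N^{[1]}_{(f^u,H_i)}$; this is the source of the $kn\rho$ summand. Meanwhile $\sum_i N_{(f^u,H_i)}(r,r_0)=q\,T_{f^u}(r,r_0)+O(1)$ by the First Main Theorem for each hyperplane. Assembling everything and dividing by $\tilde\omega$ (using (iii), $\tilde\omega\ge (n+1)/(2N-n+1)$, which accounts for the factor $(2N-n+1)/(n+1)$ in the statement) gives \eqref{eq3.3}. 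For \eqref{eq3.4} the only change is in the passage from $\nu^{[1]}_{(f^u,D)}$ to $\sum_i\nu^{[1]}_{(f^u,H_i)}$: under the stronger hypothesis the Nochka weights act directly on the individual truncated divisors, so the small-integration error from the Wronskian enters divided by $N$ after using $\tilde\omega\le n/N$ as well, producing $l_0 n/(\lambda N)$ in place of $l_0/\lambda$.

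The main obstacle, and the point requiring care, is making the small-integration machinery interact correctly with the condition $(C_\rho)$: one must show that a term of the form $\int_{S(r)}\log^+|z^\alpha g|^t\sigma_m$ with $g\in S(l_0;\cdot)$, after being fed back through Jensen's formula, is dominated by $\frac{1}{t}\log\bigl(\frac{R^{2m-1}}{R-r}\sum_u T_{f^u}(R,r_0)\bigr)$ and then, by the standard lemma on logarithmic derivatives adapted to K\"ahler manifolds (the ``$(C_\rho)$'' hypothesis supplies the missing $\rho\Omega_{f^u}+dd^c\log h^2\ge\ric\,\omega$ needed to convert such a $\log$-term into $\varepsilon\sum_u T_{f^u}(r)+O(1)$ off a small exceptional set of $r$), is absorbed with the weight $\rho$ times the level. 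This is exactly where the factors multiplying $\rho$ in the final inequalities are pinned down, and it is the step where one cannot be cavalier about the exceptional set of radii and the choice of $R$ as a function of $r$. Once that estimate is in place, \eqref{eq3.3} and \eqref{eq3.4} follow by letting $r\to R_0$ and comparing leading coefficients of the $T_{f^u}$.
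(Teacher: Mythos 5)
Your divisor-level bookkeeping matches the paper's: the paper also combines Lemma \ref{lem3.1} with the Nochka weights of Lemma \ref{5.1} to prove $\sum_{i=1}^q\omega_i\nu_{(f^u,H_i)}-\nu_{W^u}\le\nu^{[1]}_{(f^u,D)}$ (and the variant $\le\sum_i\omega_i\nu^{[1]}_{(f^u,H_i)}$ for the second assertion), and the factors $(2N-n+1)/(n+1)$ and $n/N$ do come from parts (iii) of that lemma exactly as you say. That part of your plan is sound.

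The gap is in the analytic core. Your plan is to integrate the divisor inequality, bound $N_h$ by $p\sum_uT_{f^u}$ plus a ``small error,'' absorb the small-integration and Wronskian errors into $\rho\sum_uT_{f^u}$ via a logarithmic-derivative lemma, and then ``let $r\to R_0$ and compare leading coefficients of the $T_{f^u}$.'' This fails precisely in the case the lemma is designed to cover: when $R_0<\infty$ and $\limsup_{r\to 1}\sum_uT_{f^u}(r,r_0)/\log\frac{1}{1-r}<\infty$ (e.g.\ the characteristic functions stay bounded), the error term $O\bigl(\log^+\frac{1}{1-r}+\log^+\sum_uT_{f^u}(r)\bigr)$ from any log-derivative estimate is \emph{not} $o\bigl(\sum_uT_{f^u}(r)\bigr)$, there is no leading coefficient to compare, and the whole comparison collapses --- this is exactly the obstruction the paper flags in the introduction as the reason for introducing $S(l_0;\cdot)$ and $B(p,l_0;\cdot)$. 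Moreover, $(C_\rho)$ does not function the way you describe: it does not convert log-terms into $\varepsilon\sum_uT_{f^u}+O(1)$. What the paper actually does in this regime is a proof by contradiction: assuming the inequality fails, it sets $t=\rho/\bigl(\tilde\omega(q-2N+n-1)-\tfrac{p}{\lambda}\bigr)$, forms $\phi=|\zeta_1|\cdots|\zeta_k|\,|z^\beta h|^{1/\lambda}$ with $\zeta_u$ the normalized Wronskians, and uses $(C_\rho)$ to produce plurisubharmonic $\varphi_u$ with $e^{\varphi_u}dV\le\|f^u\|^\rho v_m$, so that $\varphi=\sum_u\varphi_u+t\log\phi$ is plurisubharmonic; the negated inequality guarantees $(kn+l_0'/\lambda)t<1$, which via H\"older and Proposition \ref{3.4} gives $\int_{\B^m(1)}e^\varphi dV<\infty$, contradicting Yau and Karp. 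The coefficients of $\rho$ in \eqref{eq3.3} and \eqref{eq3.4} are pinned down by this exponent budget $(kn+l_0/\lambda)t<1$, not by absorbing log-derivative errors. Only in the complementary case (fast growth of $T$, or $R_0=\infty$) does the paper run the direct comparison you propose. Without the Yau--Karp volume argument (or a substitute), your proof does not establish the lemma.
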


\noindent
\textit{Proof}.\
Since each $f^u$ is differentiably nondegenerate, $df^u$ has the rank $n$ at some points outside the indeterminacy locus of $f^u$. Hence, there exist indices $\alpha^u =(\alpha^u_0,\ldots,\alpha^u_n)\in (\N^m)^{n+1}$ with $|\alpha^u_0|=0,|\alpha^u_i|=1\ (1\le i\le n)$ such that 
\begin{align}\label{eq3.5}
\begin{split}
W^u&:=\det (\mathcal D^{\alpha^u_i}f^u_j; 0\le i,j\le n)\\
&=(f^u_n)^{n+1}\det (\mathcal D^{\alpha^u_i}(f^u_j/f^u_n); 1\le i\le n, 0\le j\le n-1)\not\equiv 0.
\end{split}
\end{align}
For each $R^o=\{r^o_1,...,r^o_{n+1}\}\subset\{1,...,q\}$ with $\rank \{H_i\}_{i\in R^o}=\sharp R^o=n+1$, we set 
$$W^u_{R^o}\equiv  \det (\mathcal D^{\alpha^u_i}(f^u,H_{r^0_j}); 0\le i\le n,1\le j\le n+1).$$
Denote by $\tilde\omega, \omega_i\ (1\le i\le q)$ the Nochka's weights of the family $\{H_i\}_{i=1}^q$. We need the following two claims.

\begin{claim}\label{cl3.6}
$\sum_{i=1}^q\omega_i\nu_{(f^u,H_i)}(z)-\nu_{W^u}(z)\le \nu^{[1]}_{(f^u,D)}$.
\end{claim}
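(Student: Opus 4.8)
The plan is to prove Claim \ref{cl3.6} by a purely local computation of divisors, working pointwise at a generic point $z$ of the analytic set where $(f^u,D)$ vanishes. First I would reduce to a local situation: fix a regular point $b$ of $\{\prod_{i=1}^q(f^u,H_i)=0\}$ lying outside the indeterminacy locus $I(f^u)$ and outside all the lower-dimensional intersections; then at $b$ only those hyperplanes $H_i$ with $b\in (f^u)^{-1}(H_i)$ contribute, and by the $N$-subgeneral position hypothesis there are at most $N$ such indices. Call this index set $R_0$; enlarge it (using the Nochka construction / Lemma \ref{5.1}(iv)) to a set $R$ with $\sharp R=N+1$, assigning $E_i=1$ to the newly added indices so that the Nochka inequality $\prod_{i\in R}E_i^{\omega_i}\le\prod_{i\in R^o}E_i$ still transfers; here I would choose $R^o\subset R$ with $\rank\{H_i\}_{i\in R^o}=\sharp R^o=n+1$.

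Next I would apply Lemma \ref{lem3.1} to the $n+1$ hyperplanes $\{H_i\}_{i\in R^o}$, which are in general position, to get the pointwise bound
\[
\sum_{i\in R^o}\nu_{(f^u,H_i)}(b)-\nu_{W^u_{R^o}}(b)\le\nu^{[1]}_{\prod_{i\in R^o}(f^u,H_i)}(b),
\]
after noting that $W^u$ and $W^u_{R^o}$ differ by multiplication by the nonzero constant $\det$ of the coefficient matrix of $\{H_i\}_{i\in R^o}$, so $\nu_{W^u}=\nu_{W^u_{R^o}}$. The task then is to pass from the general-position estimate on $R^o$ to the Nochka-weighted estimate on all of $R$ (hence on all of $\{1,\dots,q\}$, since outside $R_0$ the divisor $\nu_{(f^u,H_i)}(b)$ is zero). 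Applying Lemma \ref{5.1}(iv) with $E_i=\exp(\nu_{(f^u,H_i)}(b))$ — or more honestly, taking logs of the multiplicative inequality, which gives $\sum_{i\in R}\omega_i\nu_{(f^u,H_i)}(b)\le\sum_{i\in R^o}\nu_{(f^u,H_i)}(b)$ — I combine this with the displayed inequality and the trivial bound $\nu^{[1]}_{\prod_{i\in R^o}(f^u,H_i)}(b)\le\nu^{[1]}_{(f^u,D)}(b)$ to conclude $\sum_{i=1}^q\omega_i\nu_{(f^u,H_i)}(b)-\nu_{W^u}(b)\le\nu^{[1]}_{(f^u,D)}(b)$ at every such generic $b$.

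Finally I would promote this from generic points to an inequality of divisors on all of $\B^m(R_0)$: the estimate holds on a dense open subset of every irreducible component of $\supp(f^u,D)$, and since both sides are divisors (locally constant along components away from a codimension-two set), an inequality valid generically on each component is valid as an inequality of divisors. Points of the indeterminacy locus, of $\{W^u=0\}$ not meeting $\supp(f^u,D)$, and of the pairwise intersections contribute nonnegatively to $\nu_{W^u}$ (or are negligible), so they only help. I expect the main obstacle to be the bookkeeping in the Nochka-weight step: one must be careful that the index set $R_0$ of ``active'' hyperplanes at $b$ really does have size $\le N$ (this is exactly where $N$-subgeneral position enters), that padding $R_0$ up to $R$ with weight-one entries is legitimate, and that Lemma \ref{5.1}(iv) is being applied with the correct $E_i$ so that the logarithmic form of the conclusion is the one needed. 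Everything else is the local Wronskian computation already carried out in Lemma \ref{lem3.1}.
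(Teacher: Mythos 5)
Your proposal is correct and follows essentially the same route as the paper's own proof: bound the number of active hyperplanes at a generic point by $N$ via subgeneral position, pad to an index set of size $N+1$, use Lemma \ref{5.1}(iv) with $E_i=e^{\nu_{(f^u,H_i)}(z)}$ to pass from the Nochka-weighted sum to an unweighted sum over an $(n+1)$-element subset in general position, and then apply Lemma \ref{lem3.1} to that subset. The only differences are presentational — you spell out that $\nu_{W^u}=\nu_{W^u_{R^o}}$ and the promotion from generic points to a divisor inequality, which the paper leaves implicit.
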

Indeed, assume that  $z$ is a zero of some $(f^u,H_i)(z)$ and $z$ is outside the indeterminancy locus $I(f^u)$ of $f^u$. Since $\{H_i\}_{i=1}^q$ is in $N$-subgeneral position, it implies that $z$ is not zero of more than $N$ functions $(f^u,H_i)$. Without loss of generality, we may assume that $z$ is not zero of $(f^u,H_i)$ for each $i>N$. Put $R=\{1,...,N+1\}.$ Choose $R^1\subset R$ such that 
$$\sharp R^1=\rank\{H_i\}_{i\in R^1}=n+1$$
and $R^1$ satisfies Lemma \ref{5.1} iv) with respect to numbers $\bigl \{e^{\nu_{H_i(f^u)}(z)} \bigl \}_{i=1}^q.$
Then we have
\begin{align*}
\sum_{i\in R}\omega_i\nu_{(f^u,H_i)}(z)\le \sum_{i\in R^1}\nu_{(f^u,H_i)}(z).
\end{align*}
By Lemma \ref{lem3.1}, this implies that
\begin{align*}
\nu_{W^u}(z)= \nu_{W^u_{R^1}}(z)& \ge \sum_{i\in R^1}\nu_{(f^u,H_i)}(z)-\nu^{[1]}_{\prod_{s\in R^1}(f^u,H_s)}(z).
\end{align*}
Hence, we have 
\begin{align*}
\sum_{i=1}^q\omega_i\nu_{(f^u,H_i)}(z)-\nu_{W^u}(z)\le \min\{1,\nu_{\prod_{s=1}^q(f^u,H_i)}(z)\}=\nu^{[1]}_{(f^u,D)}(z).
\end{align*}
The claim is proved.

By Claim \ref{cl3.6}, we see that 
$$\nu^{[1]}_{(f^u,D)}\ge \sum_{i=1}^q\omega_i\nu_{(f^u,H_i)}-\nu_{W^u}.$$
Then we have 
\begin{align}\label{eq3.7}
\nu_h\ge\lambda\sum_{u=1}^k\nu^{[1]}_{(f^u,D)}\ge\lambda\sum_{u=1}^k\left (\sum_{i=1}^q\omega_i\nu_{(f^u,H_i)}-\nu_{W^u}\right).
\end{align}

On the other hand we also have the following claim.
\begin{claim}\label{cl3.8}
$\sum_{i=1}^q\omega_i\nu_{(f^u,H_i)}(z)-\nu_{W^u}(z)\le \sum_{i=1}^q\omega_i\min\{1,\nu_{(f^u,H_i)}(z)\}$.
\end{claim}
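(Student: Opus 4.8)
\textit{Proof of Claim \ref{cl3.8} (proposal).}\ The plan is to prove Claim \ref{cl3.8} by a pointwise divisor estimate, in direct parallel with Claim \ref{cl3.6}. Fix a point $z$ lying outside the indeterminacy locus $I(f^u)$; the remaining exceptional set has codimension at least two and is therefore invisible to all divisor inequalities involved. Subtracting the two sides, the asserted inequality is equivalent to
$$\sum_{i=1}^q\omega_i\bigl(\nu_{(f^u,H_i)}(z)-\min\{1,\nu_{(f^u,H_i)}(z)\}\bigr)\le\nu_{W^u}(z),$$
that is, to $\sum_{i=1}^q\omega_i\max\{\nu_{(f^u,H_i)}(z)-1,0\}\le\nu_{W^u}(z)$. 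If $z$ is a zero of none of the $(f^u,H_i)$, the left-hand side vanishes while $\nu_{W^u}(z)\ge 0$, so from now on I may assume $(f^u,H_i)(z)=0$ for at least one index $i$.

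Since $\{H_i\}_{i=1}^q$ is in $N$-subgeneral position and $z\notin I(f^u)$, at most $N$ of the functions $(f^u,H_i)$ vanish at $z$, so I can choose $R\subset\{1,\dots,q\}$ with $\sharp R=N+1$ that contains every index $i$ with $\nu_{(f^u,H_i)}(z)>0$ (padding, if necessary, with indices at which $(f^u,H_i)$ does not vanish, which is possible as $q>2N-n+1\ge N+1$). Applying Lemma \ref{5.1}(iv) with the numbers $E_i:=e^{\max\{\nu_{(f^u,H_i)}(z)-1,0\}}\ge 1$ produces a subset $R^o\subset R$ with $\sharp R^o=\rank\{H_i\}_{i\in R^o}=n+1$ and, after taking logarithms,
$$\sum_{i\in R}\omega_i\max\{\nu_{(f^u,H_i)}(z)-1,0\}\le\sum_{i\in R^o}\max\{\nu_{(f^u,H_i)}(z)-1,0\}.$$
Here the indices of $R$ inserted during the padding contribute $0$ on the left, so the left-hand side equals $\sum_{i=1}^q\omega_i\max\{\nu_{(f^u,H_i)}(z)-1,0\}$.

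It remains to estimate the right-hand side. Lemma \ref{lem3.1}, applied to $f^u$, to the $n+1$ hyperplanes $\{H_i\}_{i\in R^o}$ (which are in general position because they have rank $n+1$), and to the differential operators $\alpha^u$ fixed in \eqref{eq3.5} (for which $W^u$ is precisely the Wronskian-type determinant appearing in Lemma \ref{lem3.1}), gives $\sum_{i\in R^o}\nu_{(f^u,H_i)}(z)-\nu_{W^u}(z)\le\nu^{[1]}_{\prod_{i\in R^o}(f^u,H_i)}(z)$. Since trivially $\sum_{i\in R^o}\min\{1,\nu_{(f^u,H_i)}(z)\}\ge\nu^{[1]}_{\prod_{i\in R^o}(f^u,H_i)}(z)$, I obtain
$$\sum_{i\in R^o}\max\{\nu_{(f^u,H_i)}(z)-1,0\}=\sum_{i\in R^o}\nu_{(f^u,H_i)}(z)-\sum_{i\in R^o}\min\{1,\nu_{(f^u,H_i)}(z)\}\le\nu_{W^u}(z),$$
and chaining the three displayed inequalities yields the claim. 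I do not anticipate a real obstacle: the argument is the truncated analogue of Claim \ref{cl3.6}, and the only points that need a little care are verifying $E_i\ge 1$ before invoking Lemma \ref{5.1}(iv), and keeping the algebra between $\min\{1,\cdot\}$ and $\max\{\cdot-1,0\}$ (and the fact that $W^u$ is unchanged, up to a nonzero constant, when the coordinate hyperplanes are replaced by $\{H_i\}_{i\in R^o}$) straight.
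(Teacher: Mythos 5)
Your proposal is correct and follows essentially the same route as the paper: the same reduction to $\sum_i\omega_i\max\{\nu_{(f^u,H_i)}(z)-1,0\}\le\nu_{W^u}(z)$, the same application of Lemma \ref{5.1}(iv) with $E_i=e^{\max\{\nu_{(f^u,H_i)}(z)-1,0\}}$ over an $(N+1)$-element set containing all vanishing indices, and the same final Wronskian bound over $R^o$. The only (harmless) difference is that you derive the estimate $\nu_{W^u}(z)\ge\sum_{i\in R^o}\max\{\nu_{(f^u,H_i)}(z)-1,0\}$ from Lemma \ref{lem3.1}, whereas the paper asserts it directly as a standard property of the generalized Wronskian.
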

Indeed, assume that  $z$ is a zero of some $(f^u,H_i)$'s and $z$ is outside $I(f^u)$. Then $z$ is not zero of more than $N$ functions $(f^u,H_i)$. Without loss of generality, we may assume that $z$ is not zero of $(f^u,H_i)$ for each $i>N$. Put $R=\{1,...,N+1\}.$ Choose $R^2\subset R$ such that 
$$\sharp R^2=\rank\{H_i\}_{i\in R^2}=n+1$$
and $R^2$ satisfies Lemma \ref{5.1} iv) with respect to numbers $\bigl \{e^{\max\{\nu_{(f^u,H_i)}(z)-1,0\}} \bigl \}_{i=1}^q.$
 Then we have
\begin{align*}
\sum_{i\in R}\omega_i \max\{\nu_{(f^u,H_i)}(z)-1,0\} \le \sum_{i\in R^2}\max\{\nu_{(f^u,H_i)}(z)-1,0\}.
\end{align*}
This implies that
\begin{align*}
\nu_{W^u}(z)= \nu_{W_{R^2}}(z)& \ge \sum_{i\in R^2}\max\{\nu_{(f^u,H_i)}(z)-1,0\}\ge\sum_{i\in R}\omega_i \max\{\nu_{(f^u,H_i)}(z)-1,0\}. 
\end{align*}
Hence, we have 
\begin{align*}
\sum_{i=1}^q\omega_i\nu_{(f^u,H_i)}(z)-\nu_{W^u}(z) &=\sum_{i\in R}\omega_i\nu_{(f^u,H_i)}(z)-\nu_{W^u}(z)\\ 
&= \sum_{i\in R}\omega_i\min\{\nu_{(f^u,H_i)}(z),1\}\\
&+\sum_{i\in R}\omega_i\max\{\nu_{(f^u,H_i)}(z)-1,0\}-\nu_{W^u}(z)\\
&\le \sum_{i\in R}\omega_i\min\{\nu_{(f^u,H_i)}(z),1\}= \sum_{j=1}^q\omega_j\nu_{\varphi_j}(z).
\end{align*}
The claim is proved.

Hence, if we assume moreover that
$$\nu_h\ge\lambda\sum_{u=1}^k\sum_{i=1}^q\nu^{[1]}_{(f^u,H_i)}$$ 
then by Claim \ref{cl3.8} we have
\begin{align}\label{3.9}
\nu_h\ge\lambda \frac{n}{N}\sum_{u=1}^k\sum_{i=1}^q\omega_i\nu^{[1]}_{(f^u,H_i)}\ge\lambda\frac{n}{N}\sum_{u=1}^k\left (\sum_{i=1}^q\omega_i\nu_{(f^u,H_i)}-\nu_{W^u}\right).
\end{align}

From (\ref{eq3.7}) and (\ref{3.9}), in order to prove Lemma \ref{lem3.2} we only need to proof the following.
\begin{lemma}\label{lem3.10}
Let $f^1,f^2,\ldots,f^k$ and $H_1,\ldots,H_q$ be as in Theorem \ref{lem3.2}. Assume that there exists a non zero holomorphic function $h\in B(p,l_0;f^1,\ldots,f^k)$ such that
$$\nu_h\ge\lambda\sum_{u=1}^k\left (\sum_{i=1}^q\omega_i\nu_{(f^u,H_i)}-\nu_{W^u}\right ).$$
Then we have
	$$q\le 2N-n+1+\frac{p(2N-n+1)}{\lambda(n+1)}+\rho\left (kn+\dfrac{l_0}{\lambda}\right ).$$
\end{lemma}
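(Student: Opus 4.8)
The plan is to combine the divisor inequality $\nu_h \ge \lambda\sum_{u=1}^k\bigl(\sum_{i=1}^q\omega_i\nu_{(f^u,H_i)}-\nu_{W^u}\bigr)$ with the integration estimates coming from the "bounded integration" machinery of Section 2 and the Nochka weight identity of Lemma \ref{5.1}(ii). First I would pass to the universal covering so that we may work directly on $\B^m(R_0)$, and introduce the characteristic functions $T_{f^u}(r,r_0)$. Applying the First Main Theorem to each hyperplane, $N_{(f^u,H_i)}(r,r_0)\le T_{f^u}(r,r_0)+O(1)$, so upon integrating the divisor inequality and using $\sum_i\omega_i = \tilde\omega(q-2N+n-1)+n+1$ from Lemma \ref{5.1}(ii), the left-hand side $N_h(r,r_0)$ would be bounded below by roughly
$$\lambda\sum_{u=1}^k\Bigl(\bigl(\tilde\omega(q-2N+n-1)+n+1\bigr)T_{f^u}(r,r_0)-N_{W^u}(r,r_0)\Bigr)+O(1),$$
after absorbing the terms where some $(f^u,H_i)$ has large multiplicity (here one uses $\omega_i\le 1$ and the subgeneral position to control $\sum_i\omega_i N_{(f^u,H_i)}$ by $\sum_i N^{[?]}$ type terms — this is where care is needed).

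Next I would estimate $N_{W^u}(r,r_0)$ and $N_h(r,r_0)$ from above. The key input for $W^u$ is Proposition \ref{3.4}: since $W^u$ is (up to constant) the general Wronskian $W_{\alpha^u_0,\dots,\alpha^u_n}(f^u_0,\dots,f^u_n)$ with $l_0^u = |\alpha^u_0|+\cdots+|\alpha^u_n| = n$, the logarithmic derivative lemma / Proposition \ref{3.4} gives that $|W^u/\bigl((f^u,H_{r^o_1})\cdots(f^u,H_{r^o_{n+1}})\bigr)|$ — or more directly $|W^u|\cdot\|f^u\|^{-(n+1)}$ — lies in $S(n;f^u)$, so that $N_{W^u}(r,r_0)$ is controlled; meanwhile the condition $(C_\rho)$ feeds in here, contributing the $\rho$-terms: the standard estimate $N_{W^u}(r,r_0)\ge$ (degree count) but for the upper bound on $h$ we compare $N_h(r,r_0)$ with $T_h$ and use $h\in B(p,l_0;f^1,\dots,f^k)$, i.e.\ $|h|\le \|f^1\|^p\cdots\|f^k\|^p g$ with $g\in S(l_0;f^1,\dots,f^k)$. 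By Jensen's formula and the defining inequality for $S(l_0;\cdot)$, one gets for any $p'<1$,
$$N_h(r,r_0)\le \int_{S(r)}\log|h|\,\sigma_m + O(1)\le p\sum_{u=1}^k T_{f^u}(r,r_0)+O\!\left(\Bigl(\tfrac{R^{2m-1}}{R-r}\sum_u T_{f^u}(R,r_0)\Bigr)^{p'}\right)+O(1).$$

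Then I would assemble: the lower bound for $N_h$ in terms of $\lambda(\tilde\omega(q-2N+n-1)+n+1)\sum_u T_{f^u}$ minus the Wronskian contribution, set against the upper bound $p\sum_u T_{f^u}+(\text{Wronskian and }\rho\text{ terms})$. The Wronskian terms $N_{W^u}$ get absorbed on the $f^u$-side but leave behind a $\rho$-contribution: precisely, the condition $(C_\rho)$ together with the estimate of $N_{W^u}$ as a "ramification" term yields a term of size $\rho(kn+l_0/\lambda)\sum_u T_{f^u}$ roughly (the $kn$ from $k$ Wronskians each at level $n$, the $l_0/\lambda$ from the function $g$). Dividing through by $\sum_u T_{f^u}(r,r_0)$ and letting $r\to R_0$ (choosing $R$ appropriately, e.g.\ $R = r + (R_0-r)/T$ when $R_0=\infty$ is handled by the usual calculus lemma, and directly when $R_0<\infty$), the sublinear error terms vanish in the limsup, and we are left with
$$\lambda\bigl(\tilde\omega(q-2N+n-1)+n+1\bigr)\le p + \lambda(n+1)+\rho\bigl(\lambda kn + l_0\bigr)$$
— wait, more carefully one keeps $\tilde\omega\ge\frac{n+1}{2N-n+1}$ from Lemma \ref{5.1}(iii) to convert the $\tilde\omega$-coefficient into the stated form. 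Using $\tilde\omega(q-2N+n-1)\ge\frac{(n+1)(q-2N+n-1)}{2N-n+1}$ and rearranging gives $q-2N+n-1\le \frac{(2N-n+1)}{n+1}\bigl(\frac{p}{\lambda}+\rho kn + \frac{\rho l_0}{\lambda}\bigr)$, i.e.\ exactly
$$q\le 2N-n+1+\frac{p(2N-n+1)}{\lambda(n+1)}+\rho\Bigl(kn+\frac{l_0}{\lambda}\Bigr),$$
once one checks the $\rho kn$ term's coefficient: it should emerge as $\frac{(2N-n+1)}{n+1}\cdot\frac{\rho kn(n+1)}{2N-n+1}$... this bookkeeping of the $\rho$-coefficient is the one genuinely delicate point.

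\textbf{Main obstacle.} The real work, and the step I expect to be the crux, is the precise tracking of the condition $(C_\rho)$ through the estimate of the Wronskian counting functions $N_{W^u}(r,r_0)$ and through the "small integration" bound for $g$, so that the error terms combine to give \emph{exactly} the coefficient $\rho(kn + l_0/\lambda)$ rather than something larger. Concretely, one must show that each $|W^u|\,\|f^u\|^{-(n+1)}\in S(n;f^u)$ (via Proposition \ref{3.4} with the admissible set of $f^u$, whose total order is exactly $n$), that $g\in S(l_0;f^1,\dots,f^k)$ contributes at level $l_0$, that these levels add (Proposition \ref{3.2}) so the relevant function has level $kn + l_0/\lambda$ after the $\lambda$-scaling implicit in the divisor inequality, and finally that the defining inequality of $S(\cdot)$, combined with $(C_\rho)$'s comparison $\rho\Omega_{f^u}+dd^c\log h^2\ge \mathrm{Ric}\,\omega$ (which is what converts the would-be "$-N_{\mathrm{Ram}}$" into a "$+\rho T$" term), produces precisely the claimed factor of $\rho$. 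All other ingredients — First Main Theorem, Jensen, the Nochka weight identities, and the final limsup/calculus-lemma step — are by now routine in this setting.
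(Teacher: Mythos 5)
There is a genuine gap, and it sits exactly at the step you dismiss as ``routine.'' Your plan is to integrate the divisor inequality, bound $N_h$ and $N_{W^u}$ via Jensen, Proposition \ref{3.4} and the small/bounded-integration estimates, divide by $\sum_u T_{f^u}(r,r_0)$ and let $r\to R_0$, expecting the error terms to be sublinear in $\sum_u T_{f^u}$. When $R_0=+\infty$ this works (and is how the paper handles that case, where in fact no $\rho$-term is needed at all). But when $R_0<\infty$ the error terms produced by Proposition \ref{3.4} and Definition \ref{3.1} have the form $K\bigl(\frac{R^{2m-1}}{R-r}\sum_u T_{f^u}(R,r_0)\bigr)^{p'}$, and after the usual choice $r'=r+\frac{1-r}{e\max_u T_{f^u}(r,r_0)}$ they become $\frac{K'}{(1-r)^{p'}}\bigl(\log\frac{1}{1-r}\bigr)^{2p'}$. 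If $\limsup_{r\to 1}\sum_u T_{f^u}(r,r_0)/\log\frac{1}{1-r}<\infty$ these terms are \emph{not} $o\bigl(\sum_u T_{f^u}(r)\bigr)$, the division-and-limsup step collapses, and no inequality on $q$ comes out. This failure mode is precisely what the introduction of the paper warns about and is the case the lemma exists to handle.

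The paper's actual argument for $R_0<\infty$ is of a different nature: arguing by contradiction with $l_0'=l_0+\epsilon$, it sets $t=\rho/\bigl(\tilde\omega(q-2N+n-1)-\frac{p}{\lambda}\bigr)$ and $\phi=|\zeta_1|\cdots|\zeta_k|\,|z^\beta h|^{1/\lambda}$, where $\zeta_u$ is the Wronskian-to-product ratio; the divisor hypothesis guarantees $t\log\phi$ is plurisubharmonic, and the condition $(C_\rho)$ supplies plurisubharmonic $\varphi_u$ with $e^{\varphi_u}dV\le\|f^u\|^{\rho}v_m$. One then shows, via H\"older and the small-integration bounds (which require exactly $(kn+\frac{l_0'}{\lambda})t<1$ --- this inequality is where the coefficient $\rho\bigl(kn+\frac{l_0}{\lambda}\bigr)$ genuinely originates, not from any ``$-N_{\mathrm{Ram}}$ converting into $+\rho T$'' mechanism as you suggest), that $\int_{\B^m(1)}e^{\varphi_1+\cdots+\varphi_k+t\log\phi}\,dV<\infty$ in the slow-growth case, contradicting the results of Yau and Karp on complete K\"ahler manifolds; the fast-growth case is then finished by the Nevanlinna-type computation you describe. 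Without the plurisubharmonic-volume/Yau--Karp step your proof cannot close in the main case, and your accounting of the $\rho$-coefficient does not match the actual mechanism.
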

\begin{proof} 
If $R_0=+\infty$, by usual argument in Nevanlinna theory (see \cite[ineq. (3.11)-(3.12)]{No05}), we have
\begin{align*}
(q-2N+n-1)\sum_{u=1}^kT_{f^u}(r)&\le\sum_{u=1}^k\dfrac{1}{\tilde\omega}\left (\sum_{i=1}^q\omega_iN_{(f^u,H_i)}(r)-N_{W^u}(r)\right)+o(\sum_{u=1}^kT_{f^u}(r))\\
&\le \frac{2N-n+1}{\lambda(n+1)}N_h(r)+o(\sum_{u=1}^kT_{f^u}(r))\\
&\le\frac{p(2N-n+1)}{\lambda(n+1)}\sum_{u=1}^kT_{f^u}(r)+o(\sum_{u=1}^kT_{f^u}(r)),
\end{align*}
for all $r\in [1;+\infty)$ outside a Lebesgue set of finite measure. 
Letting $r\rightarrow +\infty$, we obtain
$$ q\le 2N-n+1+\frac{p(2N-n+1)}{\lambda(n+1)}.$$

Now, we consider the case where $R_0<+\infty$. Without loss of generality we assume that $R_0=1$. Suppose contrarily that $q>2N-n+1+\frac{p(2N-n+1)}{\lambda(n+1)}+\rho\left (kn+\dfrac{l_0}{\lambda}\right )$. Then, there is a positive constant $\epsilon$ such that
	$$q> 2N-n+1+\frac{p(2N-n+1)}{\lambda(n+1)}+\rho\left (kn+\dfrac{l_0+\epsilon}{\lambda}\right ).$$ 
	Put $l_0'=l_0+\epsilon >0$.

	Put $\zeta_u(z):=\left |z^{\alpha_0^u+\cdots +\alpha_n^u}\dfrac{W^{\alpha^u}(f^u)}{\prod_{i=1}^{q}|(f,H_i)|^{\omega_i}}\right |\ (1\le u\le k)$. Since $h\in B(p,l_0;f^1,\ldots,f^k)$, there exists a function $g\in S(l_0;f^1,\ldots,f^k)$ and $\beta =(\beta_1,\ldots,\beta_m)\in\mathbb Z^{m}_+$ with $|\beta|\le l_0$ such that 
	\begin{align}\label{eq3.11}
	\int_{S(r)}\left |z^\beta g\right|^{t'}\sigma_m=O\left (\frac{R^{2m-1}}{R-r}\sum_{u=1}^kT_{f^u}(r,r_0)\right )^l,
	\end{align}
for every $0\le l_0t'<l<1$ and 
\begin{align}\label{eq3.12}
|h|\le \left (\prod_{u=1}^k\|f^u\|\right )^p |g|.
\end{align}

	Put $t=\frac{\rho}{\tilde\omega(q-2N+n-1)-\frac{p}{\lambda}}>0$ (since $q-2N+n-1-\frac{p}{\lambda\tilde\omega}>q-2N+n-1-\frac{p(2N-n+1)}{\lambda(n+1)}$) and $ \phi:=|\zeta_1|\cdots |\zeta_k|\cdot|z^\beta h|^{1/\lambda}.$
	Then $a=t\log\phi$ is a plurisubharmonic function on $\B^m(1)$ and
\begin{align*}
\left (kn+\frac{l_0'}{\lambda}\right)t&\le \left (kn+\frac{l_0'}{\lambda}\right)\frac{\rho}{\tilde\omega(q-2N+n-1)-\frac{p}{\lambda}}\\
& \le \left (kn+\frac{l_0'}{\lambda}\right)\frac{\rho (2N-n+1)}{(q-2N+n-1)(n+1)-\frac{p (2N-n+1)}{\lambda}}<1.
\end{align*}
	Therefore, we may choose a positive number $p'$ such that $0\le (kn+\frac{l_0'}{\lambda})t<p'<1.$
	
	Since $f^u$ satisfies the condition $(C_\rho)$, then there exists a continuous plurisubharmonic function $\varphi_u$ on $\B^m(1)$ such that
	$$ e^{\varphi_u}dV \le \|f^u\|^{\rho}v_m.$$
	We see that $\varphi=\varphi_1+\cdots+\varphi_k+a$ is a plurisubharmonic function on $\B^m(1)$. We have
	\begin{align*}
	e^\varphi dV&=e^{\varphi_1+\cdots+\varphi_k+t\log\phi}dV\le e^{t\log\phi}\prod_{u=1}^k\|f^u\|^{\rho}v_m=|\phi|^{t}\prod_{u=1}^k\|f^u\|^{\rho}v_m\\
	&\le |z^\beta g|^{t/\lambda}\prod_{u=1}^k(|\zeta_u|^t\cdot\|f^u\|^{\rho+pt/\lambda})v_m=|z^\beta g|^{t/\lambda}\prod_{u=1}^k(|\zeta_u|^t\cdot\|f^u\|^{\tilde\omega(q-2N+n-1)t})v_m.
	\end{align*}
	Setting $x=\dfrac{l_0'/\lambda}{kn+l_0'/\lambda}$, $y=\dfrac{n}{kn+l_0'/\lambda}$, then we have $x+ky=1$. Therefore, by integrating both sides of the above inequality over $\B^m(1)$ and applying H\"{o}lder inequality,  we have
	\begin{align}\label{eq3.13}
	\begin{split}
	\int_{\B^m(1)}e^\varphi dV&\le \int_{\B^m(1)}\prod_{u=1}^k(|\zeta_u|^t\cdot\|f^u\|^{\tilde\omega(q-2N+n-1)t})|z^\beta g|^{t/\lambda}v_m\\
	&\le \left (\int_{\B^m(1)}|z^\beta g|^{t/(\lambda x)}v_m\right)^{x}\\
	&\ \ \times\prod_{u=1}^k\left (\int_{\B^m(1)}(|\zeta_u|^{t/y}\cdot\|f^u\|^{\tilde\omega(q-2N+n-1)t/y})v_m\right)^{y}\\
	&\le \left (2m\int_0^1r^{2m-1}\left (\int_{S(r)}|z^\beta g|^{t/(\lambda x)}\sigma_m\right )dr\right)^{x}\\
	&\ \ \times\prod_{u=1}^k\left (2m\int_0^1r^{2m-1}\left (\int_{S(r)}\bigl (|\zeta_u|\cdot\|f^u\|^{(\sum_{i=1}^q\omega_i-n-1)}\bigl )^{t/y}\sigma_m\right )dr\right)^{y}.
	\end{split}
	\end{align}
	
	(a) We now deal with the case where
	$$ \lim\limits_{r\rightarrow 1}\sup\dfrac{\sum_{u=1}^kT_{f^u}(r,r_0)}{\log 1/(1-r)}<\infty .$$
	We see that $\dfrac{l_0t}{\lambda x}\le \dfrac{l_0't}{\lambda x}=\bigl (kn+\dfrac{l_0'}{\lambda}\bigl )t<p'$ and $n\dfrac{t}{y}=\bigl (kn+\dfrac{l_0'}{\lambda}\bigl )t<p'$.  By lemma on logarithmic derivative there exists a positive constant $K$ such that, for every $0<r_0<r<r'< 1,$ we have
	\begin{align*}
	&\int_{S(r)}\bigl (|\zeta_u|\cdot\|f^u\|^{(\sum_{i=1}^q\omega_i-n-1)}\bigl )^{t/y}\sigma_m\le K\left (\dfrac{r'^{2m-1}}{r'-r}T_{f^u}(r',r_0)\right )^{p'}\ (1\le u\le k)\\
	\text{and }&\int_{S(r)}|z^\beta g|^{t/(\lambda x)}\sigma_m\le K\left (\dfrac{r'^{2m-1}}{r'-r}\sum_{u=1}^kT_{f^u}(r',r_0)\right )^{p'}.
	\end{align*}
		
	Choosing $r'=r+\dfrac{1-r}{e\max_{1\le u\le k}T_{f^u}(r,r_0)}$, we have
	$ T_{f^u}(r',r_0)\le 2T_{f^u}(r,r_0)$,
	for all $r$ outside a subset $E$ of $(0,1]$ with $\int_E\frac{1}{1-r}dr<+\infty$.
	Hence, the above inequality implies that
	\begin{align*}
	&\int_{S(r)}\bigl (|w_u|\cdot\|f^u\|^{(\sum_{i=1}^q\omega_i-n-1)}\bigl )^{t/y}\sigma_m\le \dfrac{K'}{(1-r)^{p'}}\left (\log\frac{1}{1-r}\right )^{2p'}\ (1\le u\le k)\\
	\text{and }&\int_{S(r)}|z^\beta g|^{t/(\lambda x)}\sigma_m\le \dfrac{K'}{(1-r)^{p'}}\left (\log\frac{1}{1-r}\right )^{2p'}
	\end{align*}
	for all $r$ outside $E$, and for some positive constant $K'$. Then the inequality (\ref{eq3.13}) yields that
	\begin{align*}
	\int_{\B^m(1)}e^udV&\le 2m\int_0^1r^{2m-1}\frac{K'}{1-r}\left (\log\frac{1}{1-r}\right)^{2p'}dr< +\infty.
	\end{align*}
	This contradicts the results of S.T. Yau \cite{Y76} and L. Karp \cite{K82}. 
	
	(b) We now deal with the remaining case where 
	$$ \lim\limits_{r\rightarrow 1} \sup\dfrac{\sum_{u=1}^kT_{f^u}(r,r_0)}{\log 1/(1-r)}= \infty.$$
	As above, we have
	$$\int_{S(r)}|z^\beta g|^{t/(\lambda x)}\sigma_m\le K\left (\dfrac{1}{1-r}\sum_{u=1}^kT_{f^u}(r,r_0)\right )^{p'}$$
	for every $r_0<r<1.$
	By the concativity of the logarithmic function, we have
	$$ \int_{S(r)}\log|z^\beta|^{t/(\lambda x)}\sigma_m+\int_{S(r)}\log|g|^{t/(\lambda x)}\sigma_m\le K''\left (\log^+\frac{1}{1-r}+\log^+\sum_{u=1}^kT_{f^u}(r,r_0)\right). $$
	This implies that
	$$\int_{S(r)}\log|g|\sigma_m= O\left (\log^+\frac{1}{1-r}+\log^+\sum_{u=1}^kT_{f^u}(r,r_0)\right) $$
	By \eqref{eq3.12}, we have
	\begin{align*}
	\sum_{u=1}^kpT_{f^u}(r,r_0)+\int_{S(r)}\log|g|\sigma_m&\ge N_h(r,r_0)+S(r)\ge\lambda\sum_{u=1}^kN_{(f,D)}^{[1]}(r,r_0)+S(r)\\ 
	&\ge \lambda\sum_{u=1}^k\dfrac{(q-2N+n-1)(n+1)}{2N-n+1}T_{f^u}(r,r_0)+S(r), 
	\end{align*}
	where $S(r)=O(\log^+\frac{1}{1-r}+\log^+\sum_{u=1}^kT_{f^u}(r_0,r))$ for every $r$ excluding a set $E$ with $\int_E\frac{dr}{1-r}<+\infty$.
	Letting $r\rightarrow 1$, we get 
$$\frac{p}{\lambda}>\frac{(q-2N+n-1)(n+1)}{2N-n+1},$$
i.e.,
$$q<2N-n+1+\frac{p(2N-n+1)}{\lambda (n+1)}.$$
This is a contradiction.

Hence, the supposition is false. The proposition is proved.
\end{proof}

\vskip0.2cm
Now consider $k$ mappings $f^1, \ldots, f^k \in \mathcal{D}(f,\{H_i\}_{i=1}^{q},1)$. We denote $\Gamma$ the set of all irreducible component of $\bigcup_{i=1}^q\{z:\ (f,H_i)(z)=0\}$. For each $\gamma\in \Gamma$, we define $V^u_{\gamma}$ to be the set of all $(c_0,\ldots,c_n)\in\C^{n+1}$ such that
$$\gamma\subset\{z\ :\ c_0f^u_0(z)+\cdots +c_nf^u_n(z)=0\}\ (1\le u\le k).$$
It easy to see that $V^u_\gamma$ is a proper vector subspace of $\C^{n+1}$. Then $\bigcup_{u=1}^k\bigcup_{\gamma\in\Gamma}V^u_\gamma$ is the union of finite proper vector spaces of $\C^{n+1}$, and then is nowhere density $\C^{n+1}$. We set 
\begin{align}\label{eq3.14}
\mathcal C:=\C^{n+1}\setminus \bigcup_{u=1}^k\bigcup_{\gamma\in\Gamma}V^u_\gamma,
\end{align}
then $\mathcal C$ is a density open subset of $\C^{n+1}$, and hence there exists $c=(c_0,\ldots,c_n)\in\mathcal C$. By changing the coordinates if necessary, without loss of generality, from here we always assume that $c=(1,0,\ldots,0)\in\mathcal C$. Then we have
$$ \dim \{z\ :\ f^u_0(z)=0\}\cap\{z\ :\ \prod_{i=1}^q(f,H_i)(z)=0\}\le m-2\ (1\le u\le k).$$

\begin{proof}[Proof of Theorem \ref{thm1.1}]
(a) Suppose that $f^1\wedge\ldots\wedge f^k\not\equiv 0$. Then there $k$ indices $0\le i_1<\cdots<i_k\le n$ such that
$$ P:= \mathrm{det}\left (\begin{array}{ccc}
f^1_{i_1}&\cdots&f^k_{i_1}\\ 
\vdots&\cdots&\vdots\\ 
f^1_{i_k}&\cdots&f^k_{i_k}
\end{array}\right )\not\equiv 0.$$
We have 
\begin{align*}
P=f^1_{0}\cdots f^k_{0}\cdot
\left |\begin{array}{cccc}
1&1&\cdots&1\\ 
\frac{f^1_{i_2}}{f^1_{i_1}}&\frac{f^2_{i_2}}{f^1_{i_1}}&\cdots&\frac{f^k_{i_2}}{f^1_{i_1}}\\ 
\vdots&\vdots&\cdots&\vdots\\ 
\frac{f^1_{i_k}}{f^1_{i_1}}&\frac{f^2_{i_k}}{f^1_{i_1}}&\cdots&\frac{f^k_{i_k}}{f^1_{i_1}}
\end{array}\right |=f^1_{0}\cdots f^k_{0}\cdot
\left |\begin{array}{cccc}
\frac{f^2_{i_2}}{f^1_{i_1}}-\frac{f^1_{i_2}}{f^1_{i_1}}&\cdots&\frac{f^k_{i_2}}{f^1_{i_1}}-\frac{f^1_{i_2}}{f^1_{i_1}}\\ 
\vdots&\cdots&\vdots\\ 
\frac{f^2_{i_k}}{f^1_{i_1}}-\frac{f^1_{i_k}}{f^1_{i_1}}&\cdots&\frac{f^k_{i_k}}{f^1_{i_1}}-\frac{f^1_{i_k}}{f^1_{i_1}}
\end{array}\right |.
\end{align*}
Hence, if a point $z\not\in\bigcup_{u=1}^k\{f^u_0=0\}$ is a zero of $(f,D)$ then it will be a zero of $P$ with multiplicity at least $k-1$. Therefore, we have
\begin{align*}
\nu_{P}\ge (k-1)\nu_{(f,D)}^{[1]}=\frac{k-1}{k}\sum_{u=1}^k\nu^{[1]}_{(f^u,D)}.
\end{align*}
It also is easy to see that $P\in B(1,0;f^1,\ldots,f^k)$. Then, by Proposition \ref{lem3.2} we have
$$q\le 2N-n+1+\frac{k(2N-n+1)}{(k-1)(n+1)}+kn\rho.$$
This is a contradiction. 

Then $f^1\wedge\cdots\wedge f^k\equiv 0$. The assertion (a)  is proved.

(b) Using the same notation and repeating the same argument as in the above part, we have 
\begin{align*}
\nu_{P}\ge (k-1)\nu_{(f,D)}^{[1]}=\frac{k-1}{k}\sum_{u=1}^k\sum_{i=1}^q\nu^{[1]}_{(f^u,H_i)}.
\end{align*}
Then, by Lemma \ref{lem3.2} we have
$$q\le 2N-n+1+\frac{kn(2N-n+1)}{(k-1)N(n+1)}+\frac{kn^2\rho}{N}.$$
This is a contradiction. 

Then $f^1\wedge\cdots\wedge f^k\equiv 0$. The theorem is proved.
\end{proof}

\section{Proof of  Theorem \ref{thm3}} 

Since the case where $M=\C^m$ have already proved by the author in \cite{Q14}, without loss of generality, in this proof we only consider the case where $M=\mathbb B^m(1)$. 

We now define:

$\bullet$ $F_k^{ij}=\dfrac {(f^k,H_i)}{(f^k,H_j)}\ (0\le k\le 2,\ 1\le i,j\le 2n+2),$

$\bullet$ $V_i=( (f^1,H_i),(f^2,H_i),(f^3,H_i))\in\mathcal M_m^3,$

$\bullet$ $\nu_i$: the divisor whose support is the closure of the set 
$$\{z;\nu_{(f^u,H_i)}(z)\ge \nu_{(f^v,H_i)}(z)=\nu_{(f^t,H_i)}(z)\text{ for a permutation } (u,v,t) \text{ of } (1,2,3)\}.$$

We write $V_i\cong V_j$ if $V_i\wedge V_j\equiv 0$, otherwise we write $V_i\not\cong V_j.$ For $V_i\not\cong V_j$, we write $V_i\sim V_j$ if there exist $1\le u<v\le 3$ such that $F_u^{ij}=F_v^{ij}$, otherwise we write $V_i\not\sim V_j$.

The following lemma is an extension of  \cite[Proposition 3.5]{Fu98} to the case of K\"{a}hler manifolds.
\begin{lemma}\label{lem4.1}
With the assumption of Theorem \ref{thm3}, let $f^1,f^2,f^3$ be three meromorphic mappings in $\mathcal D(f,\{H_i\}_{i=1}^{q},1)$.
Assume that there exist $i\in\{1,\ldots,q\}$, $c\in\mathcal C$ and  $\alpha\in\N^m$ with $|\alpha|=1$ such that
$\Phi^{\alpha}_{ic}\not\equiv 0.$
Then there exists a holomophic function $g_{i}\in B(1,1;f^1,f^2,f^3)$ such that
\begin{align*}
\nu_{g_{i}}\ge \nu^{[1]}_{(f,H_i)}+2\sum_{\underset{j\ne i}{j=1}}^q\nu^{[1]}_{(f,H_j)}
\end{align*}
\end{lemma}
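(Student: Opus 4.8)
The plan is to follow the structure of Fujimoto's argument in \cite[Proposition 3.5]{Fu98}, but to track the bi-degree of the relevant auxiliary functions so that everything lives in the spaces $B(p,l_0;f^1,f^2,f^3)$ and $S(l_0;f^1,f^2,f^3)$ introduced in Section 2. First I would write $\Phi^\alpha_{ic}$ explicitly as the Cartan auxiliary function $\Phi^\alpha(F^1,F^2,F^3)$ associated with the three meromorphic functions $F^u=(f^u,H_i)/L_c(f^u)$, where $L_c$ is the linear form corresponding to the point $c\in\mathcal C$ (so that $L_c(f^u)$ does not vanish on any component $\gamma$ of $\bigcup_j\{(f,H_j)=0\}$, by the very definition \eqref{eq3.14} of $\mathcal C$). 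Clearing denominators, $\Phi^\alpha_{ic}$ can be rewritten — up to a sign and the factor $\prod_u L_c(f^u)$ — as a $2\times 2$ determinant of first-order derivatives of quotients $(f^u,H_i)/L_c(f^u)$; this is precisely the shape controlled by Proposition~\ref{3.4} (general Wronskian of a map into $\P^1(\C)$, admissible set of total length $1$), which gives that $|z^\alpha \cdot \Phi^\alpha_{ic}/((f^1,H_i)(f^2,H_i)(f^3,H_i))|$ times suitable powers of $\|f^u\|$ belongs to $S(1;f^1,f^2,f^3)$.

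Next comes the divisor estimate, which is the heart of the matter. At a point $z$ outside the indeterminacy loci where some $(f,H_j)$ vanishes ($j\ne i$), conditions (a) and (b) defining $\mathcal D(f,\{H_i\},1)$ force $f^1(z)=f^2(z)=f^3(z)$, hence $F^1(z)=F^2(z)=F^3(z)$; a short local computation (the same as in \cite[Prop.\ 3.5]{Fu98}) then shows $\Phi^\alpha_{ic}$ vanishes to order $\ge 2$ there, giving the contribution $2\sum_{j\ne i}\nu^{[1]}_{(f,H_j)}$. At a point where only $(f,H_i)$ vanishes, the structure of the Cartan determinant (each entry $\mathcal D^\alpha(1/F^u)$ has a pole of order one less than $1/F^u$, and the $F\cdot G\cdot H$ prefactor cancels one order) yields vanishing order $\ge 1$, producing the term $\nu^{[1]}_{(f,H_i)}$. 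Combining, and using that the divisor whose local order we are bounding is exactly $\nu_{(f^1,H_i)(f^2,H_i)(f^3,H_i)\cdot\Phi^\alpha_{ic}}$ restricted appropriately, I would set
$$g_i := z^\alpha\cdot\frac{\Phi^\alpha_{ic}}{(f^1,H_i)(f^2,H_i)(f^3,H_i)}\cdot\big(\text{holomorphic correction making }g_i\text{ holomorphic}\big),$$
more precisely take $g_i$ to be $z^\alpha\Phi^\alpha_{ic}$ divided by whatever common zero-divisor is forced, and verify directly that $g_i$ is holomorphic with $\nu_{g_i}\ge \nu^{[1]}_{(f,H_i)}+2\sum_{j\ne i}\nu^{[1]}_{(f,H_j)}$.

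Finally I would check the membership $g_i\in B(1,1;f^1,f^2,f^3)$: expanding the Cartan determinant and bounding it by products of $\|f^u\|$ and a general-Wronskian-type quotient, Proposition~\ref{3.4} shows the quotient part lies in $S(1;f^1,f^2,f^3)$, while the remaining factors contribute one power of each $\|f^u\|$ — wait, one must be careful here: the required bi-degree is $(1,1)$, i.e. $|g_i|\le \|f^1\|\,\|f^2\|\,\|f^3\|\cdot g$ with $g\in S(1;\cdots)$, so I would arrange the reduced representations and the cancellations so that exactly one power of each $\|f^u\|$ survives. The main obstacle I anticipate is precisely this bookkeeping: getting the vanishing order at the common-value points to be $\ge 2$ rather than just $\ge 1$ (this is where the hypothesis $c\in\mathcal C$, ensuring $L_c(f^u)$ is nonvanishing on the relevant components, is essential, together with the coincidence $f^1=f^2=f^3$ on $\bigcup_j f^{-1}(H_j)$), and simultaneously keeping the bi-degree pinned at $(1,1)$ rather than letting it drift to $(3,\cdot)$ or $(1,2)$. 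The dimension hypothesis $\dim f^{-1}(H_i)\cap f^{-1}(H_j)\le m-2$ will be used to ignore the codimension-two bad set when passing between divisor inequalities and the estimates in $\mathcal C(\B^m(R_0))$.
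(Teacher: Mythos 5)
Your overall strategy matches the paper's: rewrite $\Phi^\alpha_{ic}$ in terms of the quotients $F_u^{ic}=(f^u,H_i)/(f^u,H_c)$, use $c\in\mathcal C$ to keep $(f^u,H_c)$ from vanishing on the components of $\bigcup_j(f,H_j)^{-1}(0)$, derive the order-$2$ vanishing along $(f,H_j)^{-1}(0)$ ($j\ne i$) from the coincidence $f^1=f^2=f^3$ there, and control the analytic size through logarithmic-derivative/Wronskian estimates. The genuine gap is in the definition of $g_i$, which is the whole content of the lemma. You set $g_i=z^\alpha\Phi^\alpha_{ic}/\bigl((f^1,H_i)(f^2,H_i)(f^3,H_i)\bigr)$ up to an unspecified ``holomorphic correction''. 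This normalization goes the wrong way. Along $(f,H_i)^{-1}(0)$ the function $\Phi^\alpha_{ic}$ in general only vanishes to order $\ge 1$ (e.g.\ when all three multiplicities equal $1$, by the homogeneity $\Phi^\alpha(hG_1,hG_2,hG_3)=h\,\Phi^\alpha(G_1,G_2,G_3)$), while $\prod_u(f^u,H_i)$ vanishes to order $\ge 3$, so your quotient has a pole of order $\ge 2$ there: it is neither holomorphic nor does it satisfy $\nu_{g_i}\ge\nu^{[1]}_{(f,H_i)}$. It also retains poles along $(f^u,H_c)^{-1}(0)$, and its bi-degree drops to roughly $(0,1)$ rather than the required $(1,1)$. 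The correct choice is $g_i=\bigl(\prod_{u=1}^3(f^u,H_c)\bigr)\cdot\Phi^\alpha_{ic}$: expanding $\Phi^\alpha_{ic}$ as a sum of terms $F_u^{ic}$ times differences of logarithmic derivatives $\mathcal D^\alpha(F_v^{ci})/F_v^{ci}$, this product becomes a sum of terms of the shape $(f^1,H_i)(f^2,H_c)(f^3,H_c)\cdot(\text{log.\ derivative differences})$, which is manifestly bounded by $\|f^1\|\,\|f^2\|\,\|f^3\|$ times a function in $S(1;f^1,f^2,f^3)$, hence lies in $B(1,1;f^1,f^2,f^3)$, and whose divisor equals $\nu_{\Phi^\alpha_{ic}}+\sum_u\nu_{(f^u,H_c)}$, to which the two-case local analysis then applies. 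Your ``correction factor'' is precisely this multiplication by $\prod_u(f^u,H_c)$, and leaving it unspecified leaves the lemma unproved.

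A second, smaller slip: you assert that each entry $\mathcal D^\alpha(1/F^u)$ has a pole of order one \emph{less} than $1/F^u$; differentiation raises the pole order by one. Consequently the order-$\ge 1$ vanishing of $\Phi^\alpha_{ic}$ along $(f,H_i)^{-1}(0)$ does not follow from a naive entrywise count, but requires the case split the paper performs (minimal multiplicity $\ge 2$, handled by isolating the entry with the worst pole inside a $2\times 2$ determinant of holomorphic combinations, versus all multiplicities equal to $1$, handled by factoring out the common zero $h$ and using the homogeneity of $\Phi^\alpha$). Without that split the local estimate at the $(f,H_i)$-points is not justified.
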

\begin{proof}
We have
\begin{align}\label{eq4.2}
\begin{split}
\Phi^{\alpha}_{ic}
&=F_1^{ic}\cdot F_2^{ic}\cdot F_3^{ic}\cdot 
\left | 
\begin {array}{cccc}
1&1&1\\
F_1^{ci}&F_2^{ci} &F_3^{ci}\\
\mathcal {D}^{\alpha}(F_1^{ci}) &\mathcal {D}^{\alpha}(F_2^{ci}) &\mathcal {D}^{\alpha}(F_3^{ci})\\
\end {array}
\right|\\
&=\left | 
\begin {array}{cccc}
F_1^{ic}&F_2^{ic} &F_3^{ic}\\
1&1&1\\
F_1^{ic}\mathcal {D}^{\alpha}(F_2^{ci}) &F_2^{ic}\mathcal {D}^{\alpha}(F_2^{ci}) &F_3^{ic}\mathcal {D}^{\alpha}(F_3^{ci})
\end {array}
\right|\\
&=F_1^{ic}\bigl(\dfrac{\mathcal {D}^{\alpha}(F_3^{ci})}{F^{ci}_3}-\dfrac{\mathcal {D}^{\alpha}(F_2^{ci})}{F^{ci}_2}\bigl)
+F^{ic}_2\bigl(\frac{\mathcal {D}^{\alpha}(F_1^{ci})}{F^{ci}_1}-\frac{\mathcal {D}^{\alpha}(F_3^{ci})}{F^{ci}_3}\bigl)\\
&\ \ +F^{ic}_3\bigl(\frac{\mathcal {D}^{\alpha}(F_2^{ci})}{F^{ci}_2}-\frac{\mathcal {D}^{\alpha}(F_1^{ci})}{F^{ci}_1}\bigl).
\end{split}
\end{align}
This implies that
$$(\prod_{u=1}^3(f^u,H_c))\cdot\Phi^{\alpha}_{ic}=g_{i},$$
where
\begin{align*}
g_{i}=&(f^1,H_i)\cdot (f^2,H_c)\cdot (f^3,H_c)\cdot\left (\dfrac{\mathcal {D}^{\alpha}(F_3^{ci})}{F^{ci}_3}-\dfrac{\mathcal {D}^{\alpha}(F_2^{ci})}{F^{ci}_2}\right)\\
&+(f^1,H_c)\cdot (f^2,H_i)\cdot (f^3,H_c)\cdot\left (\dfrac{\mathcal {D}^{\alpha}(F_1^{ci})}{F^{ci}_1}-\dfrac{\mathcal {D}^{\alpha}(F_3^{ci})}{F^{ci}_3}\right)\\
&+(f^1,H_c)\cdot (f^2,H_c)\cdot (f^3,H_i)\cdot\left (\dfrac{\mathcal {D}^{\alpha}(F_2^{ci})}{F^{ci}_2}-\dfrac{\mathcal {D}^{\alpha}(F_1^{ci})}{F^{ci}_1}\right).
\end{align*}
Hence, we easily see that
$$|g_{i}|\le C\cdot \|f^1\|\cdot \|f^2\|\cdot \|f^3\|\cdot\sum_{u=1}^3\left|\dfrac{\mathcal {D}^{\alpha}(F_u^{ci})}{F^{ci}_u}\right|,$$
where $C$ is a positive constant, and then $g_{i}\in B(1;1;f^1,f^2,f^3)$.
It is clear that
\begin{align}\label{eq4.3}
\nu_{g_{i}}=\nu_{\Phi^{\alpha}_{ic}}+\sum_{u=1}^3\nu_{(f^u,H_c)}.
\end{align}

It is clear that $g_i$ is holomorphic on a neighborhood of each point of $\bigcup_{u=1}^3(f^u,H_c)^{-1}\{0\}$ which is not contained in  $\bigcup_{i=1}^q(f,H_i)^{-1}\{0\}$. Hence, we see that all zeros and poles of $g_i$ are points contained in some analytic sets $(f,H_s)^{-1}\{0\}\ (1\le s\le q)$. We note that the intersection of any two of these set has codimension at least two. It is enough for us to prove that (\ref{eq4.3}) holds for each regular point $z$ of the analytic set $\bigcup_{i=1}^q(f,H_i)^{-1}\{0\}.$ We distinguish the following cases:

\vskip0.1cm
\noindent
\textit{Case 1:}  $z\in\supp\nu_{(f,Hj)}\ (j\ne i)$. We write $\Phi^\alpha_{ic}$ in the form
$$ \Phi^{\alpha}_{ic}
=F_1^{ic}\cdot F_2^{ic}\cdot F_3^{ic}
\times\left | 
\begin {array}{cccc}
 \bigl (F_1^{ci}-F_{2}^{ci}\bigl ) & \bigl (F_1^{ci}-F_{3}^{ci}\bigl )\\
\mathcal{D}^{\alpha}\bigl ( F_1^{ci}-F_{2}^{ci}\bigl ) & \mathcal{D}^{\alpha}\bigl ( F_1^{ci}-F_{3}^{ci}\bigl )
\end {array}
\right|. $$
Then  by the assumption that $f^1,f^2,f^3$ coincide on $\supp\nu_{(f,Hj)}$, we have $F_1^{ci}=F_2^{ci}=F_3^{ci}$ on $\supp\nu_{(f,Hj)}$. The property of the general Wronskian implies that 
$$\nu_{\Phi^{\alpha}_{ic}}(z)\ge 2=\nu^{[1]}_{(f,H_i)}(z)+2\sum_{\underset{j\ne i}{i=1}}^{q}\nu^{[1]}_{(f,H_i)}(z).$$
 
\textit{Case 2:} $z\in\supp\nu_{(f,H_i)}$. 

\textit{Subcase 2.1:}
We may assume that $2\le \nu_{(f^1,H_{i})}(z)\le\nu_{(f^2,H_{i})}(z)\le\nu_{(f^3,H_{i})}(z)$. We write
\begin{align*}
\Phi^{\alpha}_{ic}
=F_1^{ic}\biggl [F_2^{ic}(F_1^{ci}-F_{2}^{ci})F_3^{ic}\mathcal{D}^{\alpha} (F_1^{ci}-F_{3}^{ci})-F_3^{ic}(F_1^{ci}-F_{3}^{ci})F_2^{ic}\mathcal{D}^{\alpha} (F_1^{ci}-F_{2}^{ci})\biggl ]
\end{align*}
It is easy to see that $F_2^{ic}(F_1^{ci}-F_{2}^{ci})$, $F_3^{ic}(F_1^{ci}-F_{3}^{ci})$ are holomorphic on a neighborhood of $z$, and 
\begin{align*}
&\nu^{\infty}_{F_3^{ic}\mathcal{D}^{\alpha}(F_1^{ci}-F_{3}^{ci})}(z)\le 1, \\
\text{ and }\ \ \ &\nu^{\infty}_{F_2^{ic}\mathcal{D}^{\alpha} (F_1^{ci}-F_{2}^{ci})}(z)\le 1.
\end{align*}
Therefore, it implies that
\begin{align*}
\nu_{\Phi^{\alpha}_{ic}}(z)&\ge 1 =\nu^{[1]}_{(f,H_i)}(z)+2\sum_{\underset{j\ne i}{i=1}}^{q}\nu^{[1]}_{(f,H_j)}(z) .
\end{align*}

\textit{Subcase 2.2:}
We may assume that $\nu_{(f^1,H_{i})}(z)=\nu_{(f^2,H_{i})}(z)=\nu_{(f^3,H_{i})}(z)=1$. We choose a neighborhood $U$ of $z$ and a holomorphic function $h$ without multiple zero on $U$ such that $\nu_h=\nu_{(f^u,H_{i})}\ (1\le u\le 3)$ on $U$. Hence $F^{ic}_u=hG^{ic}_u$ for non-vanishing holomorphic functions $G^{ic}_u$ on $U$. By the properties of Wronskian, we have $\Phi^{\alpha}_{ic}=h\Phi(G^{ic}_1,G^{ic}_2,G^{ic}_3)$ on $U$. This implies that
\begin{align*}
\nu_{\Phi^{\alpha}_{ic}}(z)=\nu_h(z)=\nu^{[1]}_{(f,H_i)}(z)+2\sum_{\underset{j\ne i}{i=1}}^{q}\nu^{[1]}_{(f,H_j)}(z) .
\end{align*}

From the above three cases, the inequality (\ref{eq4.3}) holds. The lemma is proved.
\end{proof}

\begin{proof}[{\sc Proof of theorem \ref{thm3}}]  Denote by $P$ the set of all $i\in\{1,\ldots ,q\}$ satisfying there exist $c\in\mathcal C$, $\alpha\in\N^m$ with $|\alpha|=1$ such that $\Phi^{\alpha}_{ij}\not\equiv 0$. 

If $\sharp P\ge 3$, for instance we suppose that $1,2,3\in P$, then there exist three corresponding holomorphic functions $g_{1},g_{2},g_{3}$ as in Lemma \ref{lem4.1}. We have $g_1g_2g_3\in B(3,3;f^1,f^2,f^3)$ and
$$ \nu_{g_1g_2g_3}\ge 2\sum_{u=1}^3\sum_{i=1}^q\nu^{[1]}_{(f^u,H_i)}-\frac{1}{3}\sum_{u=1}^3\sum_{i=1}^3\nu^{[1]}_{(f^u,H_i)}\ge \frac{5}{3}\sum_{u=1}^3\sum_{i=1}^q\nu^{[1]}_{(f^u,H_i)}.$$
Then, by Theorem \ref{lem3.2} we have
$$q\le (2N-n+1)\left (1+\frac{9n}{5N(n+1)}\right)+\rho\left (3n+\dfrac{9n}{5N}\right ).$$
This is a contradiction.

Hence $\sharp P\le 2$. We suppose that $i\not\in P\ \forall i=1,\ldots,q-2.$ Therefore, for all $i\in\{1,\ldots,q-2\}$ and $\alpha\in\N^m$ with $|\alpha|=1$ we have 
$$\Phi^{\alpha}_{ic}\equiv 0\ \forall c\in\mathcal C.$$
By the density of $\mathcal C$ in $\C^{n+1}$, the above identification holds for all $c\in\C^{n+1}\setminus\{0\}$. 

In particular, $\Phi^{\alpha}_{ij}\equiv 0$ for all $i\in\{1,\ldots,q-2\}$ and $\alpha\in\N^m$. Then for $1\le i<j\le q-2$, one of two following assertions holds:

(i) $F^{ij}_1=F^{ij}_2$ or $F^{ij}_2=F^{ij}_3$ or $F^{ij}_3=F^{ij}_1$.

(ii) $\dfrac{F^{ij}_1}{F^{ij}_2}$, $\dfrac{F^{ij}_2}{F^{ij}_3}$ and $\dfrac{F^{ij}_3}{F^{ij}_1}$ are all constant.

\begin{claim}\label{cl4.4}
For any two indices $i,j$ if there exists two mappings of  $\{f^1,f^2,f^3\}$, for instance they are $f^1,f^2$ such that $F^{ij}_1=F^{ij}_2$ then $F^{ij}_1=F^{ij}_2=F^{ij}_3$.
\end{claim}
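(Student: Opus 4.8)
The plan is to exploit the sharing hypothesis (b) — that $f^1,f^2,f^3$ agree on $\bigcup_{i=1}^q f^{-1}(H_i)$ — together with the identity $F^{ij}_1=F^{ij}_2$ to force $F^{ij}_3$ into line as well. First I would set $\varphi=F^{ij}_1=F^{ij}_2$ and $\psi=F^{ij}_3$, so both are (nonconstant, by $V_i\not\cong V_j$) meromorphic functions on $\B^m(1)$ whose zero and pole divisors are supported on $(f,H_i)^{-1}\{0\}\cup(f,H_j)^{-1}\{0\}$. The key observation is that on $\supp\nu_{(f,H_s)}$ for any $s\notin\{i,j\}$ the three mappings coincide as points of $\P^n(\C)$, and since the coordinate $c=(1,0,\dots,0)$ was chosen in $\mathcal C$ (so that no $(f^u,H_\ell)$ vanishes identically on any component $\gamma$ where it shouldn't), one gets that the ratios $F^{ij}_u$ all take the same value at such points; in particular $\varphi=\psi$ on $\bigcup_{s\ne i,j}\supp\nu_{(f,H_s)}$.

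Next I would count zeros. Because $q$ is large (the hypothesis of Theorem \ref{thm3} forces $q$ to exceed $2N-n+1$ by a definite amount, hence $q-2\ge n+2$ at the very least), the function $\varphi-\psi$, which is meromorphic with poles only along $(f,H_i)^{-1}\{0\}\cup(f,H_j)^{-1}\{0\}$, vanishes on every $(f,H_s)^{-1}\{0\}$ with $s\ne i,j$. One then compares the resulting lower bound for $N_{\varphi-\psi}(r)$ — which picks up $\sum_{s\ne i,j}N^{[1]}_{(f,H_s)}(r)\gtrsim (q-2)\cdot\frac{n+1}{2N-n+1}T(r)$ via Nochka weights and the second main theorem — against the first main theorem bound $N_{\varphi-\psi}(r)\le T_{\varphi-\psi}(r)+O(1)\le T_\varphi(r)+T_\psi(r)+O(1)$, where each $T_\varphi,T_\psi$ is controlled by a bounded multiple of $\sum_u T_{f^u}(r)$ (the difference of two pullback divisors of hyperplanes). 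For the $R_0<\infty$ case one runs instead the "functions of bounded integration'' machinery: $\varphi-\psi$ lies in $B(p,0;f^1,f^2,f^3)$ for a small $p$, and $\nu_{\varphi-\psi}\ge \sum_{s\ne i,j}\nu^{[1]}_{(f,H_s)}$, so Lemma \ref{lem3.2} (with $\lambda=1$, $l_0=0$, the appropriate $p$) yields an upper bound on $q$ of exactly the shape $2N-n+1+\frac{p(2N-n+1)}{n+1}+3n\rho$, contradicting the hypothesized lower bound on $q$ once $p$ is small enough. Hence $\varphi-\psi\equiv 0$, i.e.\ $F^{ij}_3=F^{ij}_1$, as desired.

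The main obstacle I anticipate is making the bookkeeping for $T_{\varphi-\psi}$ (equivalently, the bi-degree $p$ of $\varphi-\psi$ in $B(p,0;f^1,f^2,f^3)$) tight enough that the induced bound on $q$ genuinely contradicts the numerical hypothesis of Theorem \ref{thm3}. Since $\varphi=(f^1,H_i)/(f^1,H_j)$ and $\psi=(f^3,H_i)/(f^3,H_j)$, clearing denominators gives $\varphi-\psi=\frac{(f^1,H_i)(f^3,H_j)-(f^3,H_i)(f^1,H_j)}{(f^1,H_j)(f^3,H_j)}$, and the numerator is dominated by $\|f^1\|\,\|f^3\|$ up to a constant while the denominator's zeros are absorbed into the divisor inequality; one should get $p=2$ at worst, and the resulting bound $q\le 2N-n+1+\frac{2(2N-n+1)}{n+1}+3n\rho$ is comfortably violated by the theorem's assumption. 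I would also need to handle carefully the possibility that $\varphi-\psi$ has poles that cancel some of the claimed zeros along $(f,H_i)^{-1}\{0\}\cup(f,H_j)^{-1}\{0\}$ — but those sets are excluded from the sum $\sum_{s\ne i,j}$, so the zero estimate there is unaffected, and the codimension-two intersection hypothesis guarantees the divisor inequalities hold off a set that does not contribute to $N(r,\cdot)$.
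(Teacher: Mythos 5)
Your proposal takes a genuinely different route from the paper, and it contains a numerical gap that I do not believe can be closed. The paper does not prove this claim by counting at all: since the hypothesis of Theorem \ref{thm3} implies that of Theorem \ref{thm1.1}(b) with $k=3$, one already knows $f^1\wedge f^2\wedge f^3\equiv 0$. The paper then argues purely linear-algebraically over the field $\mathcal M$ of meromorphic functions on $\B^m(1)$: every vector $\bigl((f^u,H_s)/(f^u,H_j)\bigr)_{u=1,2,3}$ lies in the $\mathcal M$-span of the two vectors corresponding to $s=i$ and $s=j$, and since those two spanning vectors have equal first and second coordinates (by $F^{ij}_1=F^{ij}_2$ and trivially for $s=j$), so does every vector in their span; hence $(f^1,H_s)/(f^1,H_j)=(f^2,H_s)/(f^2,H_j)$ for all $s$, forcing $f^1=f^2$ and contradicting the standing reductio assumption in the proof of Theorem \ref{thm3} that the three maps are pairwise distinct. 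This costs nothing numerically.

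Your counting argument, by contrast, does not fit under the theorem's hypothesis on $q$. The numerator $h=(f^1,H_i)(f^3,H_j)-(f^3,H_i)(f^1,H_j)$ vanishes only to order $\ge 1$ along each $(f,H_s)^{-1}(0)$ with $s\ne i,j$, so the divisor inequality you can feed into Lemma \ref{lem3.2} is $\nu_h\ge\sum_{s\ne i,j}\nu^{[1]}_{(f,H_s)}=\frac13\sum_{u=1}^3\sum_{s\ne i,j}\nu^{[1]}_{(f^u,H_s)}$, i.e.\ $\lambda=\frac13$ rather than $1$, applied to the subfamily of $q-2$ hyperplanes. Even with the most favorable accounting ($p=1$, $l_0=0$), the lemma yields only $q-2\le 2N-n+1+\frac{3(2N-n+1)}{n+1}+3n\rho$, and this is \emph{not} contradicted by $q>2N-n+1+\frac{9n(2N-n+1)}{5N(n+1)}+\rho\bigl(3n+\frac{9n}{5N}\bigr)$, because $\frac{9n}{5N}\le\frac95<3$. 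Even your own more optimistic target bound $q\le 2N-n+1+\frac{2(2N-n+1)}{n+1}+3n\rho$ is not violated, since $\frac{9n}{5N}<2$; concretely, in the benchmark case $N=n$, $q=n+3$ the theorem's hypothesis holds for small $\rho$ while your derived bound reads roughly $q\le n+6+3n\rho$ and excludes nothing. So the step ``the resulting bound on $q$ \dots is comfortably violated by the theorem's assumption'' is false, and the argument does not go through: the whole point of routing this claim through the algebraic dependence $f^1\wedge f^2\wedge f^3\equiv 0$ is to avoid paying any further counting cost here.
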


Indeed, suppose contrarily that $F^{ij}_1=F^{ij}_2\ne F^{ij}_3$. Denote by $\mathcal M$ the field of all meromorphic functions on $\B^m(1)$. Then two vectors 
$$\left (\frac{(f^1,H_i)}{(f^1,H_j)},\frac{(f^2,H_i)}{(f^2,H_j)},\frac{(f^3,H_i)}{(f^3,H_j)}\right ) \text{ and } \left (\frac{(f^1,H_j)}{(f^1,H_j)},\frac{(f^2,H_j)}{(f^2,H_j)},\frac{(f^3,H_j)}{(f^3,H_j)}\right )$$
 are linear independent on $\mathcal M$. Since $f^1\wedge f^2\wedge f^3\equiv 0$, the vector 
$$\left (\frac{(f^1,H_s)}{(f^1,H_j)},\frac{(f^2,H_s)}{(f^2,H_j)},\frac{(f^3,H_s)}{(f^3,H_j)}\right)$$ 
belongs to the vector space spanned by two above vectors on $\mathcal M$ for all $s$. Since $\frac{(f^1,H_i)}{(f^1,H_j)}=\frac{(f^2,H_i)}{(f^2,H_j)}$ and $\frac{(f^1,H_j)}{(f^1,H_j)}=\frac{(f^2,H_j)}{(f^2,H_j)}$, it yields that $\frac{(f^1,H_s)}{(f^1,H_j)}=\frac{(f^2,H_s)}{(f^2,H_j)}$ for all $1\le s\le q$. This implies that $f^1=f^2$, which contradicts to the supposition.
Hence, we must have $F^{ij}_1=F^{ij}_2=F^{ij}_3$. The claim is proved.

From the above claim we see that for any two indices $1\le i,j\le q-2$ and two mappings $f^u,f^v$ we must have $F^{ij}_u=F^{ij}_v$ or there exists a constant $\alpha\ne 1$ with $F^{ij}_u=\alpha F^{ij}_v$.

Now we suppose that, there exists $F^{ij}_1=\beta F^{ij}_2\ (i<j)$ with $\beta\ne 1$. Since $F^{ij}_1=F^{ij}_2$ on $\bigcup_{s\ne i,j}(f,H_i)^{-1}\{0\}$, it follows that $\bigcup_{s\ne i,j}(f,H_s)^{-1}\{0\}=\emptyset$. Take an index $t\in\{1,\ldots,q-2\}\setminus\{i,j\}$, then we must have $F^{it}_1\ne F^{it}_2$ or $F^{jt}_1\ne F^{jt}_2$. For instance, we suppose that $F^{it}_1\ne F^{it}_2$. Similarly as above, we have $\bigcup_{s\ne i,t}(f,H_s)^{-1}\{0\}=\emptyset$. Therefore $\bigcup_{s\ne i}(f,H_s)^{-1}\{0\}=\emptyset$. This implies that $\delta_{f}^{[1]}(H_s)=1$ for all $s\in\{1,\ldots,q\}\setminus\{i\}$. By Theorem \ref{lem3.2}, we have
$$ q-1\le 2N-n+1+\rho\frac{(2N-n+1)n}{N+1}.$$
This is a contradiction.

Therefore,  $F^{ij}_1=F^{ij}_2=F^{ij}_3$ for all $1\le i<j\le q-2$. This implies that $f^1=f^2=f^3$. The supposition is false.

Hence, we must have $f^1=f^2$ of $f^2=f^3$ or $f^3=f^1$.
The theorem is proved.
\end{proof}

\vskip0.2cm
\noindent
{\bf Disclosure statement:} The author states that there is no conflict of interest. 

\vskip0.2cm
\noindent
{\bf Data Availability Statements:} The datasets generated during and/or analysed during the current study are available in the documents in the reference list.


\begin{thebibliography}{99}

\bibitem{D} S. J. Drouilhet, \textit{A unicity theorem for meromorphic mappings between algebraic varieties}, Trans. Amer. J. Math. 265, (1981), 349--358.

\bibitem{Fu85} H. Fujimoto, \textit{Non-integrated defect relation for meromorphic mappings from complete K\"{a}hler manifolds into $\P^{N_1}(\C)\times\cdots\times\P^{N_k}(\C)$}, Japan. J. Math. \textbf{11} (1985) 233--264.

\bibitem{Fu86} H. Fujimoto, \textit{A unicity theorem for meromorphic maps of a complete Kähler manifold into $\P^N(\C)$}, Tohoko Math. J. \textbf{38} (1986), 327--341.

\bibitem{Fu98} H. Fujimoto, \textit{Uniqueness problem with truncated multiplicities in value distribution theory}, Nagoya Math. J. Vol. \textbf{152} (1998), 131--152.

\bibitem{K82} L. Karp,\textit{Subharmonic functions on real and complex manifolds}, Math. Z. \textbf{179} (1982) 535--554. 

\bibitem{Fu83} H. Fujimoto, \textit{On the Gauss mapping from a complete minimal surface in $\R^m$}, J. Math. Soc. Japan \textbf{35} (1983) 279--288. 

\bibitem{No05} J. Noguchi, \textit{A note on entire pseudo-holomorphic curves and the proof of Cartan-Nochka's theorem}, Kodai Math. J. \textbf{28} (2005) 336--346

\bibitem{Q14} S. D. Quang, \textit{Finiteness problem for meromorphic mappings sharing $n+3$ hyperplanes of $\P^n(\C)$}, Ann. Polon. Math. \textbf{112} (2014), no. 2, 195--215. 

\bibitem{Q19} S. D. Quang, \textit{Algebraic relation of two meromorphic mappings on a Kähler manifold having the same inverse images of hyperplanes}, J. Math. Anal. Appl. \textbf{486} (2020), no. 1, 123888, 17 pp.

\bibitem{Q20} S. D. Quang, \textit{Meromorphic mappings of a complete connected K\"{a}hler manifold into a projective space sharing hyperplanes}, to appear in Complex Var. Elipptic Equat. (2020), DOI: 10.1080/17476933.2020.1767088. 

\bibitem{RS} M. Ru and M. Sogome, \textit{Non-integrated defect relation for meromorphic mappings from complete K\"{a}hler manifolds into $\P^n(\C)$ intersecting hypersurfaces}, Trans. Amer. Math. Soc. \textbf{364} (2012), 1145--1162.

\bibitem{Y76} S. T. Yau, \textit{Some function-theoretic properties of complete Riemannian manifolds and their applications to geometry}, Indiana U. Math. J. \textbf{25} (1976), 659--670.
\end{thebibliography}
\end{document}